\newtheorem{theorem}{Theorem}[section]
\newtheorem{lemma}[theorem]{Lemma}
\newtheorem{corollary}[theorem]{Corollary}
\newtheorem*{thm}{Theorem A } 
\newtheorem*{MenshovRademacher}{Theorem B}
\newtheorem*{theoremC}{Theorem C}
\author{Vakhtang Tsagareishvili}
\address{Department of Mathematics, Faculty of Exact and Natural Sciences\\
	Ivane Javakhishvili Tbilisi State University\\
	Chavchavadze str. 1, Tbilisi 0128, Georgia}
\email{cagare@ymail.com}
\author{Giorgi Tutberidze}
\address{Viktor Kupradze Institute of Mathematics, School of Science and Technology\\The University of Georgia\\
	77a Merab Kostava St, Tbilisi 0128, Georgia\\
	and Department of Computer Science, Faculty of Exact and Natural Sciences, Ivane Javakhishvili Tbilisi State University\\
	Chavchavadze str. 1, Tbilisi 0128, Georgia}
\email{g.tutberidze@ug.edu.ge \and g.tutberidze@tsu.ge}
\author{Giorgi Cagareishvili}
\address{Department of Mathematics, Faculty of Exact and Natural Sciences\\
	Ivane Javakhishvili Tbilisi State University\\
	Chavchavadze str. 1, Tbilisi 0128, Georgia}
\email{giorgicagareishvili7@gmail.com}
\title[Unconditional convergence]{Unconditional convergence of general Fourier series}
\keywords{Unconditional convergence, Orthonormal systems, Fourier coefficients, Sequence of linear functionals, Banach space.}
\subjclass{42C10, 46B07}
\dedicatory{}
\thanks{The research is supported by Shota Rustaveli National Science Foundation grant no. FR-24-698}
\begin{document}
	
	\begin{abstract}
		S.~Banach, in particular, proved that for any function---even for the constant function \( f(x) = 1 \), where \( x \in [0,1] \)---the convergence of its Fourier series with respect to a general orthonormal system (ONS) is not guaranteed.
		
		In this paper, we establish conditions on the functions \( \varphi_n \) of an orthonormal system \( (\varphi_n) \) under which the Fourier series of functions \( f \in \mathrm{Lip}_1 \) converge unconditionally almost everywhere.
		
		Our investigation primarily relies on the properties of sequences of linear functionals in Banach spaces, which we use to prove the main results presented in this article. We show that the aforementioned conditions not only exist but are also optimal in a certain sense. Furthermore, we demonstrate that any orthonormal system contains a subsystem for which the Fourier series of every function \( f \in \mathrm{Lip}_1 \) converges unconditionally.
	\end{abstract}
	
	\maketitle

	\section{Introduction}
	In the monographs \cite{Alexits, KaczmarzSteinhaus, KashinSaakyan} and the papers \cite{Olevskii, Orlicz, Rademacher, Tandori}, problems concerning the theory of convergence of orthonormal series are thoroughly studied. Interestingly, according to results by Menchov and Banach, the convergence of general orthonormal series and the convergence of general Fourier series for functions from certain differentiability classes are fundamentally different problems. In the former case, the coefficients of the orthonormal series play a decisive role. In the latter, the mere fact that a function \( f \) (\( f \ne 0 \)) belongs to a differentiability class does not guarantee the convergence of its Fourier series with respect to general ONSs.
	
	Therefore, for the Fourier series with respect to an ONS to be convergent, one must impose additional conditions on the functions \( \varphi_n \) of the system \( (\varphi_n) \). The main objective of this work is to identify such conditions on the functions \( \varphi_n \) that ensure the unconditional convergence of the Fourier series of every function in the \( \mathrm{Lip}_1 \) class.
	
	Related problems have been investigated in several papers  
	\cite{Cagareishvili, GogoladzeTsagareishvili2, GogoladzeTsagareishvili13, GogoladzeTsagareishvili1, GogoladzeTsagareishvili, GogoladzeTsagareishvili3, PTT, PTW, PTW1, PSTW, tep1, tep2, tep4, Tsagareishvili01, Tsagareishvili2, Tsagareishvili4, Tsagareishvili5, tsatut1, tsatut3}.
	
	The main results of Theorems  \ref{theorem1} - \ref{theorem6} and Corollary \ref{cor1} are presented and proved in Section 3. The new applications concerning the convergence of the general Fourier series can be found in Section 4. See Theorem \ref{theorem7}, and Theorem \ref{theorem8}.

	\section{Preliminaries}
	\vspace{-6pt}
	
	Let \( (\varphi_n) \) be an orthonormal system (ONS) in \( L_2([0,1]) \). Suppose that \( \varepsilon \in (0,1) \), and define
	\[
	p = p(\varepsilon) = 2 - \varepsilon, \qquad q = q(\varepsilon) = \frac{2 - \varepsilon}{1 - \varepsilon}.
	\]
	It is evident that for any fixed \( \varepsilon \in (0,1) \), we have
	\[
	1 < p(\varepsilon) < +\infty \quad \text{and} \quad 1 < q(\varepsilon) < +\infty.
	\]
	
	Firstly, we must establish the following expression
	\begin{eqnarray}\label{eq1}
		M_n(a,\varepsilon) := \frac{1}{n} \sum_{i=1}^{n-1} \left|\int_{0}^{i/n} P_n( {a,\varepsilon, x})dx\right|,
	\end{eqnarray}
	where $(a_n)\in l_{q(\varepsilon)}$ and
	\begin{eqnarray} \label{eq2}
		P_n(a,\varepsilon, x) := \sum_{k=1}^{n}a_k \varphi_k(x).
	\end{eqnarray}
	
	Let us suppose that $f\in L_2,$  then the sequence of real numbers 
	\begin{equation} \label{eq*}
		{{C}_{n}}(f)=\int\limits_{0}^{1}{f(x){{\varphi }_{n}}(x)dx}, \ \ \ \ (n=1,2, \dots)
	\end{equation}
	are the Fourier coefficients of the function $f.$
	By $A$ we denote the Banach space of absolutely convergent functions with the norm  ${{\left\| f \right\|}_{A}}$ defined by
	
	\begin{equation} \label{*}
		{{\left\| f \right\|}_{A}}\,:=\,{{\left\| f \right\|}_{C}}\,+\,\int\limits_{0}^{1}{\left| \frac{df}{dx} \right|dx.}
	\end{equation}
	
	We will investigate the functionals $\{B_n(f)\}$ defined by 
	
	\begin{eqnarray}
		\label{eq7} B_n(f):=\sum_{k=1}^{n} C_k(f) a_k &= &\int_{0}^{1} f(x) \sum_{k=1}^{n} a_k \varphi_{k}(x) dx  \\
		&= & \int_{0}^{1} f(x) P_n (a, \varepsilon, x)dx. \notag 
	\end{eqnarray}
	where $f\in L_2,$ $a=\{a_n\} \in l_2$ is an arbitrary sequence of numbers.
	
	For the investigation of the sequence of functionals \( \{ B_n(f) \} \), we require the following result due to S.~Banach (see, e.g., \cite{Banach}):
	
	\begin{thm} 
		Let $f\in L_2$ be an arbitrary, non-zero function. Then there exists an ONS $({\varphi}_{n})$ such that
		$$\limsup_{n \rightarrow \infty}\left|S_n(x,f)\right|=+\infty \ \ \text{a.e. on} \ \ [0,1],$$
		where  (see \eqref{eq*})
		$$S_n(f,x):=\sum_{k=1}^n C_k(f) \varphi_k(x)$$
		
	\end{thm}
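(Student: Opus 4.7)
The plan is to combine normalization with Menshov's classical construction of an almost-everywhere divergent orthogonal series, and then to transfer the divergence to the Fourier series of $f$ through a controlled rank-one perturbation.

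First, by homogeneity I may assume $\|f\|_2=1$, and write $f^\perp$ for the closed hyperplane of $L_2[0,1]$ perpendicular to $f$. Next I invoke Menshov's theorem, adapted to $f^\perp$: there exist an ONS $(\psi_n)\subset f^\perp$ and a sequence $(b_n)\in\ell_2$ with $\sum b_n^2<1$ such that
\[
\limsup_{N\to\infty}\Bigl|\sum_{n=1}^N b_n\psi_n(x)\Bigr|=+\infty \quad\text{a.e. on } [0,1].
\]

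The heart of the argument is then the ansatz $\varphi_n=b_n f+g_n$ with $g_n\in f^\perp$. Imposing $\|\varphi_n\|_2=1$ and pairwise orthogonality $\langle\varphi_n,\varphi_m\rangle=\delta_{nm}$ forces the Gram matrix of $(g_n)$ to equal $I-bb^\top$, a positive definite matrix when $\sum b_n^2<1$; a Cholesky-type square root then yields the explicit choice $g_n=\psi_n-c\,b_n\,h$, where $h=\sum_k b_k\psi_k\in L_2$ and $c=(1-\sqrt{1-\|b\|^2})/\|b\|^2$. By construction $C_n(f)=\langle f,\varphi_n\rangle=b_n$, and therefore
\[
S_N(f,x)=\Bigl(\sum_{n=1}^N b_n^2\Bigr)f(x)+\sum_{n=1}^N b_n\psi_n(x)-c\,h(x)\sum_{n=1}^N b_n^2.
\]
The first and third terms converge almost everywhere (to $\|b\|^2 f(x)$ and $c\,h(x)\|b\|^2$ respectively), while the middle sum diverges almost everywhere by the choice of $(\psi_n)$ and $(b_n)$; hence $\limsup_N|S_N(f,x)|=+\infty$ a.e., which is the desired conclusion.

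The main obstacle is the Menshov input step — producing a divergent orthogonal series whose underlying ONS actually sits inside the codimension-one subspace $f^\perp$. In practice this is handled either by repeating Menshov's construction directly inside $f^\perp$ (which is isometric to an $L_2$ space of a suitable measure), or by starting from an ambient Menshov system $(\tilde\psi_n)$ on $[0,1]$, applying orthogonal projection onto $f^\perp$, and quantifying the correction $\tilde\psi_n-\psi_n=\langle\tilde\psi_n,f\rangle f$, whose contribution $\sum b_n\langle\tilde\psi_n,f\rangle f(x)$ converges absolutely a.e. by Cauchy--Schwarz and therefore cannot destroy the almost-everywhere divergence of the projected partial sums.
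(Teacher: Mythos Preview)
First, note that the paper does not actually prove Theorem~A: it is quoted as a classical result of Banach and simply cited to \cite{Banach}, so there is no in-paper proof to compare against. Your rank-one perturbation strategy is in fact a standard route to this theorem, and your main computation is correct: if $(\psi_n)$ is an orthonormal system inside $f^\perp$ whose partial sums $\sum_{n\le N} b_n\psi_n$ diverge a.e.\ with $\sum b_n^2<1$, then $\varphi_n=b_n f+\psi_n-c\,b_n h$ (with $h=\sum_k b_k\psi_k$ and your value of $c$) is an ONS with $C_n(f)=b_n$, and $S_N(f,x)$ differs from the divergent sum only by terms that converge a.e.

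The gap is exactly where you flag it, and neither of your two suggested remedies works as written. Option~1 is not rigorous: $f^\perp$ is abstractly isometric to some $L_2(\mu)$, but almost-everywhere divergence on $[0,1]$ is a pointwise notion, not a Hilbert-space invariant, so Menshov's system cannot simply be transported through such an isometry. Option~2 produces the projections $\psi_n=\tilde\psi_n-\alpha_n f$ (with $\alpha_n=\langle\tilde\psi_n,f\rangle$), which do lie in $f^\perp$ and do retain the a.e.\ divergence of $\sum b_n\psi_n$, but they are \emph{not} orthonormal --- their Gram matrix is $I-\alpha\alpha^\top$ --- and your explicit formula $g_n=\psi_n-c\,b_n h$ yields the Gram matrix $I-bb^\top$ for $(g_n)$ only when $(\psi_n)$ is orthonormal. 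The clean repair is to apply your own trick a second time: drop one $\tilde\psi_{n_0}$ if necessary so that $\sum_n\alpha_n^2<1$ (this does not spoil divergence), then re-orthonormalize the projected family via the rank-one square root of $I-\alpha\alpha^\top$; this adds to each $\psi_n$ only a scalar multiple of the single $L_2$ function $\sum_m\alpha_m(\tilde\psi_m-\alpha_m f)$, hence contributes an a.e.\ convergent correction to the partial sums. With that extra step your argument is complete.
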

	Moreover, we recall the following well-known result of D.E.Menshov (see e.g. \cite{Alexits} ch.2, $\# 5$ p.111).
	\begin{MenshovRademacher}
		\label{theorem1.1} 
		Let $(\varphi_n)$ be an ONS on $\left[0,1\right].$ If the series
		$$\sum_{n=1}^{\infty} \left|a_n\right|^{2-\varepsilon}$$
		is convergent for some $\varepsilon \in (0, 1),$ then the series 
		$$	\sum_{n=1}^{\infty}a_n \varphi_n\left(x\right)
		$$
		is unconditionally convergence  a.e. on $\left[0,1\right]$.
	\end{MenshovRademacher}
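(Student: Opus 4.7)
The statement is the classical Menshov-Rademacher-Orlicz theorem on unconditional a.e.\ convergence of orthogonal series with $\ell^{2-\varepsilon}$-coefficients. The plan is to reduce unconditional convergence to a sign-series question and then invoke the Menshov-Rademacher maximal inequality dyadically in coefficient size.

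First, I would reduce to a.e.\ convergence of $\sum_n \epsilon_n a_n \varphi_n(x)$ for every sign sequence $\epsilon_n \in \{\pm 1\}$. This gives unconditional a.e.\ convergence because the identity
\[
\sum_{n \in S} a_n \varphi_n(x) = \tfrac{1}{2}\sum_n a_n \varphi_n(x) + \tfrac{1}{2}\sum_n \epsilon_n a_n \varphi_n(x), \qquad \epsilon_n := 2\mathbf{1}_S(n)-1,
\]
yields a.e.\ convergence of every subseries, which is the standard criterion for unconditional a.e.\ convergence of orthogonal series. The hypothesis $\sum|a_n|^{2-\varepsilon} < \infty$ is invariant under sign changes, so we may fix an arbitrary sign sequence and work with it throughout.

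Next, I would partition the indices by coefficient size: $E_k := \{n : 2^{-k-1} < |a_n| \leq 2^{-k}\}$ for $k \geq 0$. The hypothesis yields $|E_k| \leq M \cdot 2^{(k+1)(2-\varepsilon)}$ and hence $\sum_{n \in E_k} a_n^2 \leq C \cdot 2^{-\varepsilon k}$. Applying the classical Menshov-Rademacher maximal inequality to the finite sub-ONS $(\varphi_n)_{n \in E_k}$ gives
\[
\int_0^1 \max_F \Bigl|\sum_{n \in F} \epsilon_n a_n \varphi_n(x)\Bigr|^2 dx \leq C (\log|E_k|)^2 \sum_{n \in E_k} a_n^2 \leq C' k^2 \cdot 2^{-\varepsilon k},
\]
where $F$ ranges over initial segments of a fixed enumeration of $E_k$. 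Since $\sum_k k^2 \cdot 2^{-\varepsilon k} < \infty$, the Borel-Cantelli lemma shows that the block maxima vanish a.e., producing a.e.\ convergence of the block-aligned rearrangement.

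The final step is to pass from a.e.\ convergence of block-aligned partial sums to a.e.\ convergence of arbitrary partial sums in the original order. I expect this to be the main technical obstacle: it requires controlling excursions of partial sums across block boundaries, which is achieved by a second maximal inequality inside each block together with the summability $\sum_k k^2 \cdot 2^{-\varepsilon k} < \infty$. Once a.e.\ convergence is established for the fixed (arbitrary) sign sequence, the reduction in the first step delivers the desired unconditional a.e.\ convergence.
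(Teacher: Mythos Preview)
The paper does not prove Theorem~B at all: it is quoted as a classical result and attributed to Menshov via the reference to Alexits' monograph. There is therefore no ``paper's own proof'' to compare against; you are supplying a proof where the paper simply cites one.

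Your outline is essentially Orlicz's classical argument and is sound, but two points deserve comment. First, the sign-series reduction is more elaborate than needed. Both the hypothesis $\sum|a_n|^{2-\varepsilon}<\infty$ and the property of being an ONS are invariant under permutations of the index set, so it suffices to prove \emph{ordinary} a.e.\ convergence of $\sum a_n\varphi_n$ under the stated hypothesis; unconditional convergence then follows by applying the same result to every rearrangement. Second, the step you flag as ``the main technical obstacle'' is in fact routine once you observe that each block $E_k=\{n:2^{-k-1}<|a_n|\le 2^{-k}\}$ is \emph{finite} (indeed $|E_k|\le C\,2^{(2-\varepsilon)k}$). Write
\[
T_k(x)=\max_{N}\Bigl|\sum_{n\in E_k,\ n\le N} a_n\varphi_n(x)\Bigr|,
\]
so that the Menshov--Rademacher maximal inequality gives $\|T_k\|_{L^2}\le C\,k\,2^{-\varepsilon k/2}$ and hence $\sum_k T_k(x)<\infty$ a.e. Given $\delta>0$, pick $K_0$ with $2\sum_{k>K_0}T_k(x)<\delta$ (possible a.e.), and then choose $M_0$ so large that $\bigcup_{k\le K_0}E_k\subset\{1,\dots,M_0\}$, which is possible precisely because those blocks are finite. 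For $N>M>M_0$ the difference $S_N-S_M$ involves only indices from blocks $E_k$ with $k>K_0$, and within each such block the contribution is bounded by $2T_k$; hence $|S_N-S_M|<\delta$. This Cauchy argument replaces your anticipated ``second maximal inequality'' and closes the proof directly.
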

	We also need the following theorem (see \cite{HardyLittlewoodPolya},Ch.II.p.40) 
	\begin{theoremC}Let \( 1 < p < +\infty \), \( q = \frac{p}{p - 1} \), and suppose that \( (a_n) \in \ell_p \). Then the series
		\[
		\sum_{n=1}^{\infty} a_n b_n
		\]
		converges for every sequence \( (b_n) \) if and only if \( (b_n) \in \ell_q \).
		
	\end{theoremC}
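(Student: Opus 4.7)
The plan is to prove the two implications separately, treating the result as the classical duality identification $(\ell_p)^* \cong \ell_q$.

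For the sufficiency direction (namely, $(b_n) \in \ell_q$ implies convergence for every $(a_n) \in \ell_p$), I would apply H\"older's inequality to the partial sums:
\[
\sum_{n=1}^{N} |a_n b_n| \leq \left(\sum_{n=1}^{N}|a_n|^p\right)^{1/p}\left(\sum_{n=1}^{N}|b_n|^q\right)^{1/q} \leq \|(a_n)\|_p\,\|(b_n)\|_q,
\]
uniformly in $N$, which yields absolute convergence of $\sum a_n b_n$.

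For the nontrivial direction, suppose the series $\sum a_n b_n$ converges for every $(a_n) \in \ell_p$. I would introduce, for each $N$, the linear functional $T_N : \ell_p \to \mathbb{R}$ defined by $T_N((a_n)) = \sum_{n=1}^{N} a_n b_n$. Each $T_N$ is continuous on $\ell_p$, and by testing on the extremal sequence $a_n = \operatorname{sgn}(b_n)\, |b_n|^{q-1}$ for $n \leq N$ (and zero otherwise) one verifies that its operator norm equals $\bigl(\sum_{n=1}^{N}|b_n|^q\bigr)^{1/q}$, where the identity $p(q-1)=q$ enters. The hypothesis makes $\{T_N((a_n))\}_N$ convergent, hence bounded, for every fixed $(a_n) \in \ell_p$, so by the Banach--Steinhaus uniform boundedness principle $\sup_N \|T_N\| < \infty$, which is exactly $(b_n) \in \ell_q$.

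An alternative avoiding functional-analytic machinery is a direct proof by contradiction: if $(b_n) \notin \ell_q$, partition $\mathbb{N}$ into blocks $I_k$ on which $\sum_{n \in I_k}|b_n|^q$ is comparable to $2^k$, and set $a_n = c_k \operatorname{sgn}(b_n)\, |b_n|^{q-1}$ on $I_k$ with weights $c_k = 2^{-k/p} k^{-2/p}$. Using $p(q-1)=q$ gives $\sum |a_n|^p \asymp \sum k^{-2} < \infty$, so $(a_n) \in \ell_p$, while $\sum a_n b_n \asymp \sum c_k 2^k = \sum 2^{k/q} k^{-2/p} = +\infty$. The main obstacle is the necessity direction: the Banach--Steinhaus route is the cleanest but requires correctly identifying $\|T_N\|$ via the extremal sequence, while the direct block construction is elementary but demands careful bookkeeping to keep $(a_n)$ in $\ell_p$ while driving $\sum a_n b_n$ to infinity.
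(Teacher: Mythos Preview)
Your proposal is correct in both directions: H\"older's inequality handles sufficiency, and for necessity your Banach--Steinhaus argument with the truncation functionals $T_N$ and the extremal sequence $a_n=\operatorname{sgn}(b_n)|b_n|^{q-1}$ is standard and sound (the block construction alternative also works as written).

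However, there is nothing in the paper to compare against: the authors do not prove Theorem~C themselves but simply quote it as a classical result from Hardy, Littlewood, and P\'olya's \emph{Inequalities} (Ch.~II, p.~40), using it as a black box in the proofs of Theorems~3.1 and~3.4. For what it is worth, the argument in Hardy--Littlewood--P\'olya for the necessity direction is closer in spirit to your second, elementary route---an explicit construction of a bad $(a_n)$ when $\sum|b_n|^q=\infty$---rather than an appeal to uniform boundedness, so if you want to match the cited source your block construction is the more faithful choice.
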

	In order to carry out the analysis of the sequence of functionals \( \{B_n(f)\} \) (see equation~\eqref{eq7}), 
	we require the following fundamental lemma, which plays a crucial role in the development of our main results. 
	A proof of this lemma, as well as related discussions, can be found in  ~\cite{HardyLittlewoodPolya}, Ch.II. 15, p.26.
	
	\begin{lemma}
		Suppose that  $f,$ $F \in L_2$ and $f$ have only finite value in any point on $[0,1].$ Then
		\begin{eqnarray} \label{eq3}
			\int_{0}^{1} f\left(x\right) F \left(x\right) dx &=&  \sum_{i=1}^{n-1}\left(f\left(\frac{i}{n}\right)-f\left(\frac{i+1}{n}\right)\right)\int_{0}^{i/n}F\left(x\right)dx  \notag \\
			&+& \sum_{i=1}^{n}\int_{\left(i-1\right)/n}^{i/n}\left(f\left(x\right) -f\left(\frac{i}{n}\right)\right)F\left(x\right)dx   \\
			&+&f\left(1\right)\int_{0}^{1}F\left(x\right)dx.    \notag
		\end{eqnarray}
	\end{lemma}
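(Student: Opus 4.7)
The plan is to view the identity as an instance of summation by parts (Abel's transformation) applied to a Riemann-style decomposition of $\int_0^1 f F$. The key is to introduce the partial integrals of $F$ and rewrite the sum $\sum_i f(i/n)\int_{(i-1)/n}^{i/n} F$ as a telescoping expression.

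First, I would partition $[0,1]$ into the subintervals $[(i-1)/n, i/n]$ and split the integral accordingly. On each subinterval I write $f(x) = \bigl(f(x) - f(i/n)\bigr) + f(i/n)$, which gives
\begin{equation*}
\int_0^1 f(x) F(x)\,dx = \sum_{i=1}^n \int_{(i-1)/n}^{i/n}\!\bigl(f(x) - f(i/n)\bigr) F(x)\,dx + \sum_{i=1}^n f(i/n)\int_{(i-1)/n}^{i/n} F(x)\,dx.
\end{equation*}
The first sum already matches the second term on the right-hand side of \eqref{eq3}. It remains to show that the second sum equals the sum of the remaining two terms.

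For the second sum I would set $G_i := \int_0^{i/n} F(x)\,dx$, so that $G_0 = 0$, $G_n = \int_0^1 F(x)\,dx$, and $\int_{(i-1)/n}^{i/n} F(x)\,dx = G_i - G_{i-1}$. Abel's summation by parts then gives
\begin{equation*}
\sum_{i=1}^n f(i/n)\,(G_i - G_{i-1}) = f(1)\,G_n + \sum_{i=1}^{n-1} \bigl(f(i/n) - f((i+1)/n)\bigr) G_i,
\end{equation*}
which, after substituting the definition of $G_i$ and $G_n$, produces exactly the first and third terms of \eqref{eq3}.

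There is no serious obstacle here; the argument is purely algebraic once the partition and the auxiliary quantities $G_i$ are introduced. The only care needed is in the bookkeeping of the Abel transformation (boundary terms at $i=0$ and $i=n$) and in observing that the finiteness of $f$ at every point of $[0,1]$ is exactly what is required to make the values $f(i/n)$ and $f(1)$ well defined, so that each term in the decomposition makes sense.
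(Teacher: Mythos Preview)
Your proof is correct and is precisely the Abel partial-summation argument the paper has in mind: the paper does not write out a proof but simply cites \cite{HardyLittlewoodPolya}, Ch.~II, \S15, which is Abel's summation formula, and your decomposition into $\sum f(i/n)(G_i-G_{i-1})$ followed by summation by parts is exactly that device. The only additional remark worth making is the one you already include, that finiteness of $f$ at every point is used solely to ensure the sampled values $f(i/n)$ are defined.
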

	
	\begin{lemma} \label{lemma2}
		Suppose that $(a_n)$ is any sequence of real numbers, and define 
		\[
		p(\varepsilon) = 2 - \varepsilon, \quad q(\varepsilon) = \frac{2 - \varepsilon}{1 - \varepsilon}, \quad \text{where } \varepsilon \in (0,1).
		\]
		Then,
		\[
		\frac{1}{n} \left( \sum_{k=1}^{n} a_k^2 \right)^{1/2} = O_\varepsilon(1).
		\]
		
	\end{lemma}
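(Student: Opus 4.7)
The plan is a direct Hölder estimate, exploiting the fact that $q(\varepsilon)>2$ whenever $\varepsilon\in(0,1)$. Indeed, $q(\varepsilon)=\frac{2-\varepsilon}{1-\varepsilon}>2$ is equivalent to $\varepsilon>0$, so $q/2>1$ and we may take the conjugate exponent $(q/2)'=q/(q-2)$. (The statement is of course vacuous if no integrability is assumed on $(a_n)$; from the way the expression appears in \eqref{eq1}--\eqref{eq2} and from the convention stated immediately before \eqref{eq1}, the hypothesis in force is $(a_n)\in\ell_{q(\varepsilon)}$, and this is what I shall use.)

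First I would write $a_k^2=|a_k|^2\cdot 1$ and apply Hölder's inequality with exponents $q/2$ and $q/(q-2)$:
$$
\sum_{k=1}^{n} a_k^{2}
\;\le\;
\left(\sum_{k=1}^{n}|a_k|^{q}\right)^{\!2/q}
\cdot
n^{(q-2)/q}.
$$
Since $(a_n)\in \ell_{q(\varepsilon)}$, the first factor is bounded by $\|a\|_{\ell_{q(\varepsilon)}}^{2}$, uniformly in $n$. Taking the square root and dividing by $n$, I obtain
$$
\frac{1}{n}\left(\sum_{k=1}^{n} a_k^{2}\right)^{\!1/2}
\;\le\;
\|a\|_{\ell_{q(\varepsilon)}}\cdot n^{(q-2)/(2q)-1}.
$$

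Finally I would check that the exponent is negative. A short calculation gives
$$
\frac{q-2}{2q}-1 \;=\; -\frac{q+2}{2q},
$$
and substituting $q=(2-\varepsilon)/(1-\varepsilon)$ shows this equals $-(4-3\varepsilon)/(2(2-\varepsilon))<0$ for $\varepsilon\in(0,1)$. Consequently the right-hand side tends to $0$ as $n\to\infty$, and is in particular bounded uniformly in $n$ by a constant depending only on $\varepsilon$ (and on $\|a\|_{\ell_{q(\varepsilon)}}$). This is exactly the assertion $\frac{1}{n}\bigl(\sum_{k=1}^{n}a_k^{2}\bigr)^{1/2}=O_{\varepsilon}(1)$.

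There is no real obstacle here: the entire argument is the observation that $q(\varepsilon)>2$ together with a single application of Hölder. The only point that requires any care is keeping track of the implicit $\ell_{q(\varepsilon)}$-hypothesis on the coefficients, without which the conclusion is obviously false.
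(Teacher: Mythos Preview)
Your argument is correct and is essentially the same as the paper's: both use H\"older (the paper phrases it as the power-mean inequality $(\tfrac{1}{n}\sum a_k^2)^{1/2}\le(\tfrac{1}{n}\sum |a_k|^{q})^{1/q}$) to reach the identical bound $n^{-1/2-1/q(\varepsilon)}\|a\|_{\ell_{q(\varepsilon)}}$, and your exponent $-(q+2)/(2q)$ equals $-1/2-1/q$. Your remark that the implicit hypothesis $(a_n)\in\ell_{q(\varepsilon)}$ is needed is well taken---the paper's proof relies on it as well.
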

	\begin{proof}
		Observe that
		\[
		\frac{1}{n} \left( \sum_{k=1}^{n} a_k^2 \right)^{1/2}
		= \frac{1}{n} \sqrt{n} \left( \frac{1}{n} \sum_{k=1}^{n} a_k^2 \right)^{1/2}
		= \frac{1}{\sqrt{n}} \left( \frac{1}{n} \sum_{k=1}^{n} a_k^2 \right)^{1/2}.
		\]
		Using Hölder's inequality with exponent $q(\varepsilon) > 2$, we get
		\[
		\left( \frac{1}{n} \sum_{k=1}^{n} a_k^2 \right)^{1/2}
		< \left( \frac{1}{n} \sum_{k=1}^{n} a_k^{q(\varepsilon)} \right)^{1/q(\varepsilon)}.
		\]
		Therefore,
		\[
		\frac{1}{n} \left( \sum_{k=1}^{n} a_k^2 \right)^{1/2}
		< \frac{1}{\sqrt{n}} \left( \frac{1}{n} \sum_{k=1}^{n} a_k^{q(\varepsilon)} \right)^{1/q(\varepsilon)}
		= n^{-1/2 - 1/q(\varepsilon)} \left( \sum_{k=1}^{n} a_k^{q(\varepsilon)} \right)^{1/q(\varepsilon)}.
		\]
		This expression is $O_\varepsilon(1)$, assuming that the $q(\varepsilon)$-norm of the sequence $(a_k)$ is uniformly bounded in $n$.
		
	\end{proof}

	\section{The main results}
	
	\begin{theorem}
		\label{theorem1} Let $(\varphi_n)$ be an ONS and suppose that for some $\varepsilon \in (0,1)$  $(C_n (l)) \in l_{p(\varepsilon)},$ where $l(x)=1,$ $x \in [0,1].$ If for arbitrary $(a_n) \in l_{q(\varepsilon)}$ 
		\begin{equation}
			\label{eq5}
			M_n(a, \varepsilon) = O_{\varepsilon}(1),
		\end{equation}
		then for any $f\in Lip_1$
		\begin{eqnarray*}
			\sum_{n=1}^{\infty} \left|C_n(f)\right|^{2-\varepsilon} <+ \infty.
		\end{eqnarray*}
		
	\end{theorem}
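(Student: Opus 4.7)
The plan is to establish $(C_n(f))\in\ell_{p(\varepsilon)}$ via the uniform boundedness principle applied to the functionals $B_n$ of \eqref{eq7}, interpreted as elements of $(\ell_{q(\varepsilon)})^{*}$. The dual norm of such a finite-sum functional equals $\bigl(\sum_{k=1}^{n}|C_k(f)|^{p(\varepsilon)}\bigr)^{1/p(\varepsilon)}$, so once I show that for every fixed $(a_k)\in\ell_{q(\varepsilon)}$ the numerical sequence $\{B_n(f)\}$ is bounded in $n$, Banach--Steinhaus forces $\sup_n\|B_n\|<\infty$, from which $\sum_{k}|C_k(f)|^{2-\varepsilon}<\infty$ follows at once. (One could instead deduce convergence of $\sum C_k(f)a_k$ and appeal to Theorem C, but mere boundedness of partial sums is cleaner and sufficient.)

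To produce the uniform bound I would apply Lemma~1 with $F(x)=P_n(a,\varepsilon,x)$ to the integral representation of $B_n(f)$ given in \eqref{eq7}, splitting $B_n(f)$ into the three summands appearing in \eqref{eq3}. Writing $L$ for the Lipschitz constant of $f$, the first summand is controlled by
\[
\frac{L}{n}\sum_{i=1}^{n-1}\left|\int_{0}^{i/n}P_n(a,\varepsilon,x)\,dx\right|=L\cdot M_n(a,\varepsilon)=O_\varepsilon(1)
\]
by hypothesis \eqref{eq5}. The second summand is dominated in modulus by $(L/n)\int_0^1|P_n(a,\varepsilon,x)|\,dx$, which by the Cauchy--Schwarz inequality and the orthonormality of $(\varphi_k)$ is at most
\[
\frac{L}{n}\Bigl(\sum_{k=1}^{n}a_k^{2}\Bigr)^{1/2}=O_\varepsilon(1)
\]
by Lemma~\ref{lemma2}. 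The third summand equals $f(1)\sum_{k=1}^{n}a_kC_k(l)$, which remains bounded uniformly in $n$ by Hölder's inequality, since $(a_k)\in\ell_{q(\varepsilon)}$ and, by assumption, $(C_k(l))\in\ell_{p(\varepsilon)}$.

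The main obstacle is the first summand: it is the one term that is not handled by standard orthogonality or duality alone, and the precise form of hypothesis \eqref{eq5} is designed exactly to match the Lipschitz increments $f(i/n)-f((i+1)/n)=O(1/n)$. Once the three bounds are combined into $\sup_n|B_n(f)|<\infty$ for every $(a_k)\in\ell_{q(\varepsilon)}$, the Banach--Steinhaus step together with the dual-norm identification of $\|B_n\|$ closes the argument and gives $\sum_{n=1}^{\infty}|C_n(f)|^{2-\varepsilon}<\infty$.
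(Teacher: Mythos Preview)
Your proof is correct and follows essentially the same route as the paper: apply Lemma~1 with $F=P_n(a,\varepsilon,\cdot)$, bound the three summands exactly as you describe (via \eqref{eq5}, Lemma~\ref{lemma2}, and H\"older with $(C_k(l))\in\ell_{p(\varepsilon)}$, respectively), and then pass to $(C_n(f))\in\ell_{p(\varepsilon)}$. The only cosmetic difference is the wrap-up: the paper infers convergence of $\sum C_k(f)a_k$ and invokes Theorem~C, whereas you use Banach--Steinhaus on $(\ell_{q(\varepsilon)})^{*}$ directly from boundedness of the partial sums---your version is slightly cleaner, since the paper's bounds literally give boundedness rather than convergence, and Theorem~C is itself a repackaging of the uniform boundedness principle.
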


	\begin{proof}
		Firstly, in \eqref{eq3} we should assume that $f \in Lip_1$ and $F(x)= P_n( a, \varepsilon,x).$ Thus
		\begin{eqnarray}
			\label{eq6}
			\int_{0}^{1} f(x) P_n(a, \varepsilon, x) dx &=& \sum_{i=1}^{n-1}\left(f\left(\frac{i}{n}\right)-f\left(\frac{i+1}{n}\right)\right)\int_{0}^{i/n}P_n(a, \varepsilon, x)dx \notag  \\
			&+& \sum_{i=1}^{n}\int_{\left(i-1\right)/n}^{i/n}\left(f\left(x\right) -f\left(\frac{i}{n}\right)\right)P_n(a, \varepsilon, x) dx     \notag \\
			&+& f\left(1\right)\int_{0}^{1}P_n(a, \varepsilon, x)dx=I_1 +I_2  +I_3.   
		\end{eqnarray}
		
		As $1<q(\varepsilon)<+\infty,$ if $f \in Lip_1$ we get (see \eqref{eq2} and \eqref{eq5})
		
		\begin{eqnarray}
			\label{eq8} \left|I_1\right| &\leq& \sum_{i=1}^{n-1}\left|f\left(\frac{i}{n}\right) -f\left(\frac{i+1}{n}\right)\right|\left|\int_{0}^{i/n}P_n(a, \varepsilon, x)dx \right| \\
			&=& O(1) \frac{1}{n} \sum_{i=1}^{n-1}\left|\int_{0}^{i/n}P_n(a, \varepsilon, x)dx\right| = O(1)M_n(a, \varepsilon) = O_{\varepsilon}(1). \notag
		\end{eqnarray}
		
		As $(a_k)\in l_{q(\varepsilon)}$ and $\frac{1}{2p(\varepsilon)}< \frac{1}{2},$ by using Hölder's inequality and Lemma 2, we arrive at the following estimate:
		
		\begin{eqnarray}
			\label{eq9}  \left|I_2\right| &\leq&  \sum_{i=1}^{n} \sup_{x\in \left[\frac{i-1}{n}, \frac{i}{n}\right]}\left|f\left(x\right) -f\left(\frac{i}{n}\right)\right| \int_{\left(i-1\right)/n}^{i/n} \left|P_n(a, \varepsilon, x) \right| dx \\
			&=& O(1)\frac{1}{n} \left(\int_{0}^{1} \left(\sum_{k=1}^{n} a_k \varphi_{k}(x) \right)^2 dx \right)^{1/2}   \notag \\
			&=& O(1) \frac{1}{n} \left( \sum_{k=1}^{n} a_k^2\right)^{1/2}   = O_{\varepsilon} (1).    \notag
		\end{eqnarray}
		
		Under the assumptions of Theorem~\ref{theorem1}, and in the particular case where \( l(x) = 1 \) for \( x \in [0,1] \), 
		we apply Hölder's inequality to obtain the following estimate. 
		\begin{eqnarray}
			\label{eq10} \left|I_3\right| &=& \left|f\left(1\right)\right|\left|\int_{0}^{1}P_n(a, \varepsilon, x)dx\right| \\
			&=& O(1) \left|\sum_{k=1}^{n} a_k \int_{0}^{1} \varphi_{k}(x)dx \right|  
			= O(1) \left|\sum_{k=1}^{n} a_k C_k(l) \right| \notag\\
			&=& O(1) \left(\sum_{k=1}^{n} \left|C_k(l)\right|^{p(\varepsilon)}\right)^{1/p(\varepsilon)} \left(\sum_{k=1}^{n} \left|a_k\right|^{q(\varepsilon)}\right)^{1/q(\varepsilon)}  = O(1).        \notag
		\end{eqnarray}
		
		Finally, taking into account equations~\eqref{eq6}, \eqref{eq7}, \eqref{eq8}, \eqref{eq9}, and \eqref{eq10}, 
		we conclude that for some \(\varepsilon \in (0,1)\) and for every \( f \in \mathrm{Lip}_1 \), the series
		\[
		\sum_{n=1}^{\infty} C_n(f) a_n
		\]
		converges for any sequence \( (a_n) \in \ell_{q(\varepsilon)} \).  
		Consequently, by Theorem C, it follows that \( (C_n(f)) \in \ell_{p(\varepsilon)} \), and hence
		\[
		\sum_{n=1}^{\infty} |C_n(f)|^{2-\varepsilon} < +\infty
		\]
		for some \(\varepsilon \in (0,1)\) and for all \( f \in \mathrm{Lip}_1 \).
	\end{proof}
	
	\begin{theorem}
		\label{theorem2}Let \( (\varphi_n) \) be an orthonormal system (ONS) on \([0,1]\), and suppose that \( \varepsilon \in (0,1) \) is an arbitrary number.  
		Assume that for some sequence \( (b_n) \in \ell_{q(\varepsilon)} \),  
		\begin{equation} \label{eq11}
			\limsup_{n \to +\infty} M_n(b, \varepsilon) = +\infty.
		\end{equation}
		Then there exists a function \( g \in \mathrm{Lip}_1 \) such that for any \( \varepsilon \in (0,1) \),  
		\[
		\sum_{n=1}^\infty |C_n(g)|^{2 - \varepsilon} = +\infty.
		\]	
	\end{theorem}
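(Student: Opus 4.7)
The plan is a contradiction argument combining the uniform boundedness principle with Theorem~C. View each $B_n$ from \eqref{eq7}---with the hypothesized sequence $(b_n)$ in place of $(a_n)$---as a continuous linear functional on $\mathrm{Lip}_1$ (equipped, say, with the norm $\|\cdot\|_A$ from \eqref{*}); continuity is clear since $P_n(b,\varepsilon,\cdot)\in L_2$. Suppose for contradiction that every $f\in\mathrm{Lip}_1$ satisfied $(C_n(f))\in\ell_{p(\varepsilon)}$. Since $(b_n)\in\ell_{q(\varepsilon)}$, Theorem~C would force $\sum_k C_k(f)\,b_k$ to converge for every such $f$, hence $\sup_n|B_n(f)|<+\infty$ pointwise, and the Banach--Steinhaus theorem would yield $\sup_n\|B_n\|<+\infty$. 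The proof therefore reduces to showing that $\|B_n\|\to+\infty$ along the subsequence on which $M_n(b,\varepsilon)\to+\infty$.

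To force such growth I would build, for each $n$, a norm-bounded test function $f_n\in\mathrm{Lip}_1$ that extracts $M_n(b,\varepsilon)$ from $B_n$. Set $\sigma_i:=\operatorname{sgn}\!\left(\int_0^{i/n}P_n(b,\varepsilon,x)\,dx\right)$ for $i=1,\ldots,n-1$, and let $f_n$ be the continuous function that is linear on each $[i/n,(i+1)/n]$, satisfies $f_n(1)=0$, and has increments $f_n(i/n)-f_n((i+1)/n)=\sigma_i/n$. Then $|f_n'|\equiv 1$ a.e., $\|f_n\|_C\le 1$, and therefore $\|f_n\|_A\le 2$ uniformly in $n$.

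Applying the decomposition \eqref{eq3} with $f=f_n$ and $F=P_n(b,\varepsilon,\cdot)$, exactly as in \eqref{eq6}, the first term collapses to $I_1=\frac{1}{n}\sum_{i=1}^{n-1}\bigl|\int_0^{i/n}P_n(b,\varepsilon,x)\,dx\bigr|=M_n(b,\varepsilon)$ by the choice of $\sigma_i$; the third term vanishes because $f_n(1)=0$; and for the middle term, since $|f_n(x)-f_n(i/n)|\le 1/n$ on $[(i-1)/n,i/n]$, one gets $|I_2|\le (1/n)\|P_n(b,\varepsilon,\cdot)\|_1\le (1/n)\bigl(\sum_{k=1}^n b_k^2\bigr)^{1/2}=O_\varepsilon(1)$ by Lemma~\ref{lemma2}. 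Together these give $|B_n(f_n)|\ge M_n(b,\varepsilon)-O_\varepsilon(1)$, hence $\|B_n\|\ge \frac{1}{2}M_n(b,\varepsilon)-O_\varepsilon(1)$, which is unbounded along the prescribed subsequence---contradicting uniform boundedness. The Banach--Steinhaus step consequently yields a $g\in\mathrm{Lip}_1$ for which $\sum_k C_k(g)\,b_k$ diverges, and a final application of Theorem~C forces $(C_n(g))\notin\ell_{p(\varepsilon)}$, i.e. $\sum_n|C_n(g)|^{2-\varepsilon}=+\infty$.

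The main delicacy is engineering the test function $f_n$: one must simultaneously reproduce the absolute-value sum defining $M_n(b,\varepsilon)$ in the $I_1$-term via the sign choice, keep $\|f_n\|_{\mathrm{Lip}_1}$ uniformly bounded (achieved by the slope-$\pm 1$ structure), kill the $I_3$-term by enforcing $f_n(1)=0$ (this normalization is essential, since the auxiliary hypothesis $(C_n(l))\in\ell_{p(\varepsilon)}$ used in Theorem~\ref{theorem1} is \emph{not} available here), and keep $I_2$ of order $O_\varepsilon(1)$, which again rests on Lemma~\ref{lemma2}.
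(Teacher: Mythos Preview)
Your argument is correct and rides on the same scaffolding as the paper---the decomposition~\eqref{eq3}, a uniformly bounded family of $\mathrm{Lip}_1$ test functions, Banach--Steinhaus, and Theorem~C---but your choice of test function is cleaner. The paper uses $g_n(x)=\int_0^x\operatorname{sign}\bigl(\int_0^t P_n(b,\varepsilon,u)\,du\bigr)\,dt$; because the sign can flip inside an interval $[i/n,(i+1)/n]$, an extra $E_n/F_n$ continuity argument is needed to show that the first term $h_1$ still recovers $M_n(b,\varepsilon)$ up to $O(n^{-1/2})$, and since $g_n(1)$ is uncontrolled, the paper must first split off the case $\limsup_n\bigl|\int_0^1 P_n\bigr|=+\infty$ (disposed of via $l(x)\equiv 1$). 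Your piecewise-linear $f_n$, with discrete increments $\sigma_i/n$ fixed at the grid points and anchored by $f_n(1)=0$, yields $I_1=M_n(b,\varepsilon)$ exactly and $I_3=0$ by construction, so both complications vanish; the $I_2$ estimate and the concluding Banach--Steinhaus/Theorem~C step are then identical to the paper's. The only cosmetic point is that the contradiction wrapper in your first paragraph is superfluous: the unboundedness of $\|B_n\|$ that you establish feeds directly into Banach--Steinhaus to produce $g$, exactly as you say at the end.
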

	
	\begin{proof}
		Firstly, we suppose that for any $\varepsilon \in (0,1)$
		\begin{eqnarray*}
			\limsup_{n \rightarrow +\infty} \left|\int_{0}^{1} P_n (b, \varepsilon, x) dx\right| =+ \infty.
		\end{eqnarray*} 
		Then if $l(x)=1,$ when $x\in [0,1]$ and $$C_n(l)=\int_{0}^{1}l(x) \varphi_{k}(x) dx \ \ \ \ \ (n=1,2, \dots),$$
		we will have	
		
		\begin{eqnarray*}
			&&\limsup_{n \rightarrow +\infty} \left|\sum_{k=1}^{n} b_k C_k (l)\right| = \limsup_{n \rightarrow +\infty} \left|\sum_{k=1}^{n} b_k \int_{0}^{1}l(x) \varphi_{k}(x) dx \right| \\
			&&=  \limsup_{n \rightarrow +\infty}  \left|\sum_{k=1}^{n} b_k \int_{0}^{1} \varphi_{k}(x) dx \right| = \limsup_{n \rightarrow +\infty} \left|\int_{0}^{1} P_n (b, \varepsilon, x) dx\right| =+ \infty.
		\end{eqnarray*}
		Thus, for some ($b_n) \in l_{q(\varepsilon)}$ the series	
		\begin{eqnarray*}
			\sum_{k=1}^{\infty} b_k C_k (l)
		\end{eqnarray*}
		is divergent. Consequently $(C_k (l)) \notin l_{p(\varepsilon)}$ or 
		\begin{eqnarray*}
			\sum_{n=1}^{\infty} \left|C_n(l)\right|^{2-\varepsilon} =+\infty.
		\end{eqnarray*}
		As $l \in Lip_1,$ then in such case Theorem \ref{theorem2} is valid.
		
		Now we suppose that for any $\varepsilon \in (0,1)$
		\begin{eqnarray}
			\label{eq12} \left|\int_{0}^{1} P_n (b, \varepsilon, x) dx\right| = O(1).
		\end{eqnarray}
		In \eqref{eq6} we substitute $P_n( a, \varepsilon, x)=P_n(b, \varepsilon,x)$ and $f(x)= g_n (x),$ where	
		\begin{eqnarray}
			\label{eq13} g_n(x) = \int_{0}^{x} sign \int_{0}^{t} P_n(b,  \varepsilon, u)du dt.
		\end{eqnarray}
		Now we have $(1<q(\varepsilon)<+\infty)$
		\begin{eqnarray}
			\label{eq14} \int_{0}^{1} g_n (x) P_n (b, \varepsilon, x) dx
			&=& \sum_{i=1}^{n-1}\left(g_n\left(\frac{i}{n}\right)-g_n\left(\frac{i+1}{n}\right)\right)\int_{0}^{i/n}P_n(b, \varepsilon, x)dx \notag   \\
			&+& \sum_{i=1}^{n}\int_{\left(i-1\right)/n}^{i/n}\left(g_n\left(x\right) -g_n\left(\frac{i}{n}\right)\right)P_n(b, \varepsilon, x) dx  \notag    \\
			&+& g_n\left(1\right)\int_{0}^{1}P_n(b, \varepsilon, x)dx=h_1 +h_2  +h_3.   
		\end{eqnarray}
		According to \eqref{eq13}, the Cauchy inequality and Lemma 2, we obtain the following:	
		\begin{eqnarray} \label{eq15}
			\left|h_2\right| &=& \left| \sum_{i=1}^{n}\int_{\left(i-1\right)/n}^{i/n}\left(g_n\left(x\right) -g_n\left(\frac{i}{n}\right)\right)P_n(b, \varepsilon, x) dx \right|   \\ 
			&& \leq \frac{1}{n} \int_{0}^{1} \left|P_n (b, \varepsilon, x)\right| \leq \frac{1}{n} \left(\int_{0}^{1} P_n^2 (b, \varepsilon, x)dx\right)^{1/2} \notag\\   
			&=& \frac{1}{n} \left(\sum_{k=1}^{n} b_k^2\right)^{1/2}  = O_{\varepsilon}(1). \notag
		\end{eqnarray}
		
		Next, by using \eqref{eq12} and \eqref{eq13} we receive 
		\begin{eqnarray}
			\label{eq16}
			\left|h_3\right| = O(1).
		\end{eqnarray}
		
		Now, by $E_n$ we denote the set of any numbers $i\in {1,2,\dots,n-1},$ for all of which, there exists a point $y\in \left[\frac{i}{n}, \frac{i+1}{n} \right),$   such that 
		
		\begin{eqnarray*}
			sign\int_{0}^{y} P_n(b, \varepsilon, x) dx \ne sign\int_{0}^{(i+1)/n} P_n(b, \varepsilon, x) dx.
		\end{eqnarray*}
		
		Then as the function $\int_{0}^{x} P_n(b, \varepsilon, t) dt$  is a continuous on  $\left[\frac{i}{n}, \frac{i+1}{n} \right),$  there exists a point $x_{in}\in \left[\frac{i}{n}, \frac{i+1}{n} \right),$ such that   
		$$\int_{0}^{x_{in}} P_n(b, \varepsilon, x) dx = 0.$$
		From here when $i\in E_n$ and as
		\begin{eqnarray*}
			\int_{0}^{i/n} P_n(b, \varepsilon, x) dx  = \int_{0}^{x_{in}} P_n(b, \varepsilon, x) dx-\int_{i/n}^{x_{in}} P_n(b, \varepsilon, x) dx = -\int_{i/n}^{x_{in}} P_n(b, \varepsilon, x) dx,
		\end{eqnarray*}
		Therefore, by Lemma \ref{lemma2}, we obtain  $(1<q(\varepsilon)<+\infty)$
		\begin{eqnarray}
			\label{eq17}
			\sum_{i\in E_n}\left|\int_{0}^{i/n} P_n(b, \varepsilon, x) dx\right| &=& \sum_{i\in E_n} \left|\int_{i/n}^{x_{in}} P_n(b, \varepsilon, x) dx\right|   \\
			&\leq &\int_{0}^{1} \left|P_n(b, \varepsilon, x) dx\right| \leq \left( \int_{0}^{1}P_n^2(b, \varepsilon, x) dx\right)^{1/2} \notag \\
			&=& \left(\sum_{k=1}^{n}b_k^2\right)^{1/2} =O_\varepsilon(1)\sqrt{n} . \notag
		\end{eqnarray}
		
		Now, suppose that \( F_n = [0,1] \setminus E_n \). Then, by equation~\eqref{eq13}, we have
		
		\begin{eqnarray*}
			g_n \left(\frac{i}{n}\right) -g_n \left(\frac{i+1}{n}\right) 
			&& = - \int_{i/n}^{(i+1)/n} sign \int_{0}^{x} P_n(b, \varepsilon,t) dt dx \int_{0}^{i/n}  P_n(b, \varepsilon,x) dx \\
			&&= - \frac{1}{n} \left|\int_{0}^{i/n}  P_n(b, \varepsilon,t)dx\right|
		\end{eqnarray*}
		and
		\begin{eqnarray}
			\label{eq18} 
			&&\left|\sum_{i\in F_n} \left(g_n \left(\frac{i}{n}\right) -g_n \left(\frac{i+1}{n}\right)\right)\int_{0}^{i/n}  P_n(b, \varepsilon,x)dx\right| \\
			&&= \frac{1}{n} \sum_{i\in F_n} \left| \int_{0}^{i/n} P_n(b, \varepsilon,x)dx\right|.  \notag 
		\end{eqnarray}
		Using equations~\eqref{eq17} and \eqref{eq18}, we obtain
		
		\begin{eqnarray}
			\label{eq19} \left|h_1\right| &=& \left| \sum_{i=1}^{n-1}\left(g_n\left(\frac{i}{n}\right)-g_n\left(\frac{i+1}{n}\right)\right)\int_{0}^{i/n}P_n(b, \varepsilon, x)dx  \right| \notag \\
			&=&  \left|\sum_{i\in F_n} \left(g_n \left(\frac{i}{n}\right) -g_n \left(\frac{i+1}{n}\right)\right)\int_{0}^{i/n}  P_n(b, \varepsilon,x)dx\right.  \\
			&+& \left.\sum_{i\in E_n} \left(g_n \left(\frac{i}{n}\right) -g_n \left(\frac{i+1}{n}\right)\right)\int_{0}^{i/n}  P_n(b, \varepsilon,x)dx\right| \notag \\
			&\geq& \left|\frac{1}{n} \sum_{i\in F_n} \left|\int_{0}^{i/n}  P_n(b, \varepsilon,x)dx\right| - \frac{1}{n} \sum_{i\in E_n} \left|\int_{0}^{i/n}  P_n(b, \varepsilon,x)dx\right| \right| \notag\\
			&=& \left|\frac{1}{n} \sum_{i=1}^{n-1} \left|\int_{0}^{i/n}  P_n(b, \varepsilon,x)dx\right| - \frac{2}{n} \sum_{i\in E_n} \left|\int_{0}^{i/n}  P_n(b, \varepsilon,x)dx\right| \right|  \notag \\
			&\geq& M_n(b,\varepsilon) - O\left(\frac{1}{\sqrt{n}}\right).\notag
		\end{eqnarray}
		
		Finally, from equality \eqref{eq14} and by account of \eqref{eq15}, \eqref{eq16} and \eqref{eq19} we obtain
		\begin{eqnarray*}
			\left|\int_{0}^{1} g_n(x) P_n(b, \varepsilon,x)dx\right| \geq M_n(b,\varepsilon) - O\left(\frac{1}{\sqrt{n}}\right)-O(1).
		\end{eqnarray*}
		Thus, by the condition of the Theorem \ref{theorem2} we get
		\begin{eqnarray} \label{eq20}
			&&\limsup_{n \rightarrow +\infty}\left|\int_{0}^{1} g_n(x) P_n(b, \varepsilon,x)dx\right| = \limsup_{n \rightarrow +\infty} M_n(b,\varepsilon) =+\infty. 
		\end{eqnarray}
		
		Now we must consider the sequence of bounded and continuous linear functionals on $Lip_1$ (see \eqref{eq7})
		\begin{eqnarray*}
			B_n(f):= \int_{0}^{1} f(x)  P_n(b, \varepsilon,x)dx.
		\end{eqnarray*}
		So, from \eqref{eq20} we obtain
		\begin{eqnarray*}
			\limsup_{n \rightarrow +\infty}\left|B_n \left(g_n\right)\right| =+ \infty.
		\end{eqnarray*}
		Since
		\begin{eqnarray*}
			\left|\left|g_n\right|\right|_{Lip_1} = 	\left|\left|g_n\right|\right|_C +\sup_{x,y \in \left[0,1\right]}  \frac{\left|g_n(x)-g_n(y)\right|}{\left|x-y\right|} \leq 2,
		\end{eqnarray*}
		by the Banach-Steinhaus theorem, there exists a function $g\in Lip_1$ such that
		\begin{eqnarray*}
			\limsup_{n \rightarrow +\infty}\left|B_n \left(g\right)\right| =+ \infty.
		\end{eqnarray*}
		
		Consequently
		\begin{eqnarray*}
			\limsup_{n \rightarrow +\infty}\left|\int_{0}^{1} g(x)  P_n(b, \varepsilon,x)dx\right| 
			&=&	\limsup_{n \rightarrow +\infty}\left|\sum_{k=1}^{n} b_k \int_{0}^{1} g(x)  \varphi_{k}(x)dx\right| \\ 
			&=& \limsup_{n \rightarrow +\infty}\left|\sum_{k=1}^{n} b_k C_k(g)\right| =+ \infty.
		\end{eqnarray*}
		Hence the series 
		$$\sum_{k=1}^{\infty} b_k C_k(g)$$
		is divergent for $(b_k)\in l_{q(\varepsilon)} .$ So, it is evident that  $(C_k (g))\notin l_{p(\varepsilon)}$ for any $\varepsilon \in (0,1)$ or
		\begin{eqnarray*}
			\sum_{k=1}^{\infty} \left|C_n(g)\right|^{2-\varepsilon} =+\infty.
		\end{eqnarray*}
		
	\end{proof}
	\begin{corollary}
		\label{cor1} Let $(\varphi_n)$ be an ONS. Suppose that for some $\varepsilon \in (0,1)$  $(C_n (l)) \in l_{p(\varepsilon)},$ where $l(x)=1,$ $x \in [0,1].$ If for arbitrary $(a_n) \in l_{q(\varepsilon)}$ 
		\begin{equation*}
			M_n(a, \varepsilon) = O_{\varepsilon}(1),
		\end{equation*}
		then for any $f\in Lip_1$
		
		\begin{eqnarray*}
			\sum_{n=1}^{\infty} C_n(f) \varphi_{n}(x)
		\end{eqnarray*}
		is unconditional convergent a. e. on $[0,1].$
	\end{corollary}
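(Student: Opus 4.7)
The plan is extremely short: this corollary is nothing more than a direct synthesis of Theorem~\ref{theorem1} with Theorem B (the Menshov--Rademacher criterion stated earlier in the paper), so no new technical ingredients are required. First I would observe that the hypotheses of the corollary coincide verbatim with those of Theorem~\ref{theorem1}: we are given the same ONS $(\varphi_n)$, the same assumption $(C_n(l))\in\ell_{p(\varepsilon)}$ for the constant function $l(x)=1$, and the same boundedness condition $M_n(a,\varepsilon)=O_\varepsilon(1)$ for every $(a_n)\in\ell_{q(\varepsilon)}$.

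Next, for an arbitrary $f\in\mathrm{Lip}_1$, I would apply Theorem~\ref{theorem1} to conclude
$$\sum_{n=1}^\infty |C_n(f)|^{2-\varepsilon}<+\infty$$
for the specific $\varepsilon\in(0,1)$ supplied by the hypothesis. This is precisely the summability assumption needed to feed the Fourier coefficients of $f$ into Theorem B.

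Finally, I would invoke Theorem B with the choice $a_n:=C_n(f)$ and the same exponent $\varepsilon\in(0,1)$. Its conclusion asserts that the series
$$\sum_{n=1}^\infty C_n(f)\,\varphi_n(x)$$
converges unconditionally almost everywhere on $[0,1]$, which is exactly the statement of Corollary~\ref{cor1}. There is no genuine obstacle here, since the entire analytic burden has already been absorbed by the proof of Theorem~\ref{theorem1}; the corollary simply channels its output through the Menshov--Rademacher-type theorem to pass from $\ell_{p(\varepsilon)}$-membership of the Fourier coefficients to a.e.\ unconditional convergence of the associated Fourier series.
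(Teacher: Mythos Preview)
Your proposal is correct and matches the paper's approach exactly: the paper simply states that the corollary follows from Theorem~\ref{theorem1} together with Theorem~B, which is precisely the two-step argument you describe.
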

	
	The validity of Corollary \ref{cor1} derives from Theorems \ref{theorem1} and Theorem {B}.
	\begin{theorem}\label{theorem4}
		Let $(d_n)$ be a nondecreasing sequence of positive numbers. Any ONS $(\varphi_{n})$ contains a subsystem $(\varphi_{n_{k}})$ such that 
		\begin{eqnarray}
			\label{eq22} M_n(d a,\varepsilon)=O_{\varepsilon}(1)
		\end{eqnarray}
		for any $\varepsilon \in (0,1)$  and $(a_n) \in l_{q(\varepsilon)},$ where $da=(d_n  a_n).$
	\end{theorem}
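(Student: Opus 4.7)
The plan is to build the subsystem $(\varphi_{n_k})$ from a uniform-decay property of primitives, then bound $M_n(da,\varepsilon)$ by a single Hölder estimate.

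First I would introduce $\Psi_n(y) := \int_0^y \varphi_n(x)\,dx$ on $[0,1]$ and show that $\sup_{y\in[0,1]} |\Psi_n(y)| \to 0$ as $n \to \infty$. The family $\{\Psi_n\}$ is equicontinuous, since $|\Psi_n(y) - \Psi_n(z)| \le |y-z|^{1/2}$ by Cauchy--Schwarz combined with $\|\varphi_n\|_2 = 1$; and for every fixed $y \in [0,1]$, Bessel's inequality applied to $\chi_{[0,y]} \in L_2$ yields $\sum_n |\Psi_n(y)|^2 \le y$, so $\Psi_n(y) \to 0$ pointwise. By Arzelà--Ascoli, every subsequence of $\{\Psi_n\}$ has a uniformly convergent sub-subsequence, whose limit must be the zero function; this forces the uniform convergence $\sup_y |\Psi_n(y)| \to 0$.

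Using this uniform decay together with the positivity of $(d_n)$, I would inductively pick a strictly increasing sequence $(n_k)$ such that
\[
\sup_{y \in [0,1]} |\Psi_{n_k}(y)| \le \frac{1}{k^{2}\, d_k}.
\]
This is the only point where the choice of subsystem depends on $(d_n)$.

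To verify \eqref{eq22}, fix any $\varepsilon \in (0,1)$ and $(a_n) \in \ell_{q(\varepsilon)}$. Interchanging sum and integral,
\[
\int_0^{i/n} P_n(da,\varepsilon, x)\,dx \;=\; \sum_{k=1}^n d_k a_k \,\Psi_{n_k}(i/n).
\]
Applying Hölder's inequality with exponents $p(\varepsilon), q(\varepsilon)$ and using $d_k\,|\Psi_{n_k}(i/n)| \le k^{-2}$,
\[
\left| \int_0^{i/n} P_n(da,\varepsilon,x)\,dx \right| \;\le\; \left(\sum_{k=1}^n |a_k|^{q(\varepsilon)}\right)^{\!1/q(\varepsilon)} \left(\sum_{k=1}^\infty \frac{1}{k^{\,2\,p(\varepsilon)}}\right)^{\!1/p(\varepsilon)}.
\]
Since $p(\varepsilon) > 1$ the second factor is bounded by $(\pi^2/6)^{1/p(\varepsilon)}$, independent of $i$ and $n$. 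Averaging over $i = 1, \dots, n-1$ and dividing by $n$ therefore gives $M_n(da,\varepsilon) = O_\varepsilon(1)$, as required.

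The main obstacle is the qualitative statement $\sup_y |\Psi_n(y)| \to 0$: this does not follow from Bessel alone, and genuinely needs the Arzelà--Ascoli compactness argument built on the $|y-z|^{1/2}$ modulus of continuity. Once that is in hand, the inductive thinning and the Hölder estimate are routine, and the whole proof rests on a single rate condition $d_k\sup_y|\Psi_{n_k}(y)| \le k^{-2}$ that is summable in $k$ uniformly in $\varepsilon$.
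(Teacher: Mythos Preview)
Your argument is correct and follows the same overall architecture as the paper's: first arrange that the primitives satisfy $\sup_{y}\bigl|\int_0^y \varphi_{n_k}\bigr| \le \tfrac{1}{d_k k^{2}}$, then dispose of $M_n(da,\varepsilon)$ by a single H\"older bound on $\sum_k |a_k|\,k^{-2}$.

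The only substantive difference is in how the uniform decay of $\Psi_n(y)=\int_0^y\varphi_n$ is obtained. The paper assumes (without stating it in the theorem) that $(\varphi_n)$ is complete, applies Parseval to $\chi_{[0,x]}$ to get $\sum_n \Psi_n(x)^2 = x$, and then invokes Dini's theorem (monotone partial sums converging to the continuous limit $x$) to upgrade pointwise to uniform convergence of the tails. Your route---Bessel for pointwise $\Psi_n(y)\to 0$, the $|y-z|^{1/2}$ modulus for equicontinuity, and Arzel\`a--Ascoli for uniformity---is cleaner in that it works for \emph{any} ONS, complete or not, so it actually matches the theorem as stated. The paper's Dini argument is shorter once completeness is granted, but that extra hypothesis is a genuine cost your version avoids.
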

	
	\begin{proof}
		Let ONS $(\varphi_{n})$ be a complete system on $[0,1].$ Then, by the Parceval's equality for any $x \in [0,1]$
		\begin{eqnarray*}
			\sum_{n=1}^{\infty} \left(\int_{0}^{x} \varphi_{n} (u) du \right)^2 = x.
		\end{eqnarray*}
		According to Dini's theorem, there exists a subsequence of natural numbers \((n_k)\) such that the convergence is uniform with respect to \(x \in [0,1]\).
		
		\begin{eqnarray*}
			\sum_{s=n_k}^{\infty} \left(\int_{0}^{x} \varphi_{s} (u) du \right)^2 \leq \frac{1}{k^4}.
		\end{eqnarray*}
		From here 
		\begin{eqnarray} \label{eq23}
			\left|\int_{0}^{x} \varphi_{n_k} (u) du \right| \leq \frac{1}{d_k k^2}, \ \ \ k=1, 2, \dots
		\end{eqnarray}
		uniformly with respect to $x\in [0,1].$ Next, in our case 
		\begin{eqnarray*}
			P_n(da, \varepsilon, x) := \sum_{k=1}^{n} d_k a_k \varphi_{n_k} (x),
		\end{eqnarray*}
		then, by using \eqref{eq23}
		\begin{eqnarray*}
			\left|M_n(d a,\varepsilon)\right|&=& \frac{1}{n} \sum_{i=1}^{n-1} \left| \int_{0}^{i/n} P_n(da, \varepsilon, x) dx \right| \\
			&=& \frac{1}{n} \sum_{i=1}^{n-1} \left| \sum_{k=1}^{n} d_k a_k \int_{0}^{i/n}\varphi_{n_k} (x) dx  \right| \leq \frac{1}{n} \sum_{i=1}^{n-1}  \sum_{k=1}^{n} d_k \left|a_k\right|  \frac{1}{d_k k^2}  \\
			&\leq& \left(\sum_{k=1}^{n} \left|a_k\right| ^{p(\varepsilon)}\right)^{1/{p(\varepsilon)}} \left(\sum_{k=1}^{n} k^{-2q(\varepsilon)}\right)^{1/q(\varepsilon)} =O_{\varepsilon}(1).
		\end{eqnarray*}
		
	\end{proof}

	\begin{theorem}
		\label{theorem5} Let $(d_n)$ be a nondeacrising sequance of positive numbers and $d_n=O(\sqrt{n}).$ Any ONS $(\varphi_n)$ contains a subsystem $(\varphi_{n_k})$ such that the series 
		\begin{eqnarray*}
			\sum_{k=1}^{\infty}d_k C_{n_k} (f) \varphi_{n_{k}} (x)
		\end{eqnarray*}
		is unconditionally convergent a.e. on $[0,1]$ for an arbitrary $f\in Lip_1.$
	\end{theorem}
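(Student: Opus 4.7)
The proof parallels the combination of Theorem~\ref{theorem1} and Corollary~\ref{cor1}, applied to the subsystem $(\varphi_{n_k})$ constructed inside the proof of Theorem~\ref{theorem4}, with the weighted coefficients $d_k C_{n_k}(f)$ in the role of $C_k(f)$. Two outputs of that construction drive everything. First, Theorem~\ref{theorem4} itself gives $M_n(da,\varepsilon)=O_\varepsilon(1)$ for every $\varepsilon\in(0,1)$ and every $(a_k)\in l_{q(\varepsilon)}$. Second, estimate~\eqref{eq23} gives
$$\left|\int_0^x \varphi_{n_k}(u)\,du\right|\le \frac{1}{d_k k^2}$$
uniformly in $x\in[0,1]$; specializing to $x=1$ yields $|d_k C_{n_k}(l)|\le 1/k^2$, so $(d_k C_{n_k}(l))\in l_{p(\varepsilon)}$ because $2p(\varepsilon)>2$. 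This will serve as the analogue of the hypothesis $(C_n(l))\in l_{p(\varepsilon)}$ used in Theorem~\ref{theorem1}.

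Fix $f\in\mathrm{Lip}_1$ and an arbitrary $(a_k)\in l_{q(\varepsilon)}$, and set $\tilde P_n(x):=\sum_{k=1}^{n} d_k a_k \varphi_{n_k}(x)$, so that $\int_0^1 f(x)\tilde P_n(x)\,dx=\sum_{k=1}^n d_k a_k C_{n_k}(f)$. Applying Lemma~1 with $F=\tilde P_n$ decomposes this integral as $I_1+I_2+I_3$ exactly as in~\eqref{eq6}, and I would bound each piece by mimicking~\eqref{eq8}--\eqref{eq10}. The $\mathrm{Lip}_1$ estimate together with Theorem~\ref{theorem4} gives $|I_1|=O(1)\cdot M_n(da,\varepsilon)=O_\varepsilon(1)$, while Hölder's inequality together with $|d_k C_{n_k}(l)|\le 1/k^2$ gives
$$|I_3|\le |f(1)|\Bigl(\sum_{k=1}^n |d_k C_{n_k}(l)|^{p(\varepsilon)}\Bigr)^{1/p(\varepsilon)}\Bigl(\sum_{k=1}^n |a_k|^{q(\varepsilon)}\Bigr)^{1/q(\varepsilon)}=O(1).$$

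The growth hypothesis $d_n=O(\sqrt{n})$ enters only in the estimate of $I_2$. Here orthonormality of the subsystem gives, by Parseval, $\int_0^1 \tilde P_n^2\,dx=\sum_{k=1}^n d_k^2 a_k^2$, and since $(d_k)$ is nondecreasing with $d_k=O(\sqrt{k})\le C\sqrt{n}$ for $k\le n$, I would estimate
$$|I_2|=O\!\left(\frac{1}{n}\Bigl(\sum_{k=1}^n d_k^2 a_k^2\Bigr)^{1/2}\right)=O\!\left(\frac{1}{\sqrt{n}}\Bigl(\sum_{k=1}^n a_k^2\Bigr)^{1/2}\right)=O(n^{-1/q(\varepsilon)})=o(1),$$
where the last equality uses the same Hölder step with exponent $q(\varepsilon)/2>1$ that appears in the proof of Lemma~\ref{lemma2}.

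Combining the three estimates, the partial sums $\sum_{k=1}^n d_k a_k C_{n_k}(f)$ are uniformly bounded in $n$ for every $(a_k)\in l_{q(\varepsilon)}$; Theorem~C then forces $(d_k C_{n_k}(f))\in l_{p(\varepsilon)}$, i.e.\
$$\sum_{k=1}^\infty |d_k C_{n_k}(f)|^{2-\varepsilon}<+\infty,$$
and since $(\varphi_{n_k})$ is itself an orthonormal system, Theorem~B delivers the claimed unconditional convergence a.e.\ on $[0,1]$ of $\sum_k d_k C_{n_k}(f)\varphi_{n_k}(x)$. The main obstacle I anticipate is the $I_2$ term: the $d_k$-weighting breaks the direct application of Lemma~\ref{lemma2}, and it is precisely the hypothesis $d_n=O(\sqrt{n})$ that permits trading the Parseval sum $\bigl(\sum d_k^2 a_k^2\bigr)^{1/2}$ against $O(\sqrt{n})\bigl(\sum a_k^2\bigr)^{1/2}$ while still leaving an $o(1)$ bound after the division by $n$; without this growth restriction the scheme breaks down.
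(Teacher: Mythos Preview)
Your proposal is correct and follows essentially the same route as the paper: you use the subsystem from Theorem~\ref{theorem4} together with \eqref{eq23}, apply the Lemma~1 decomposition to $\sum_{k\le n} d_k a_k \varphi_{n_k}$, and bound the three resulting terms $S_1,S_2,S_3$ exactly as the paper does in \eqref{eq27}--\eqref{eq29} (the hypothesis $d_n=O(\sqrt{n})$ entering only in $S_2$), before invoking Theorem~C and Theorem~B. The only cosmetic difference is that in the $I_2$ estimate you bound $d_k\le C\sqrt{n}$ for $k\le n$, whereas the paper factors out $d_n$ via monotonicity; the arithmetic is the same.
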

	
	\begin{proof}
		Here we also consider the very same subsystem, which was investigated in Theorem \ref{theorem4}. That is why for any $\varepsilon \in (0,1)$  and  $(a_n)\in l_{q(\varepsilon)}$ we get $(da=(d_n a_n))$ condition \eqref{eq22}.

		Also as $p(\varepsilon)=2-\varepsilon,$  if $l(x)=1,$ $x\in [0,1]$  (see \eqref{eq23})
		\begin{eqnarray}
			\label{eq25}  \sum_{k=1}^{\infty} \left|d_k C_{n_k} (l) \right|^{p(\varepsilon)} &=& \sum_{k=1}^{\infty} \left|d_k \int_{0}^{1} \varphi_{n_{k}}(x) dx \right|^{p(\varepsilon)} \\
			&<& \sum_{k=1}^{\infty} \left|d_k \frac{1}{d_k k^2}\right|^{p(\varepsilon)} < +\infty. \notag\ \ 
		\end{eqnarray} 
		Now we rewrite equality \eqref{eq6} for $P_n (da,\varepsilon,x)$  and we obtain
		
		\begin{eqnarray}
			\label{eq26} \int_{0}^{1} f(x) P_n (d a, \varepsilon, x) dx 
			&&= \sum_{i=1}^{n-1}\left(f\left(\frac{i}{n}\right)-f\left(\frac{i+1}{n}\right)\right)\int_{0}^{i/n}P_n(da, \varepsilon, x)dx \notag  \\
			&&+ \sum_{i=1}^{n}\int_{\left(i-1\right)/n}^{i/n}\left(f\left(x\right) -f\left(\frac{i}{n}\right)\right)P_n(da, \varepsilon, x) dx    \\
			&&+ f\left(1\right)\int_{0}^{1}P_n(da, \varepsilon, x)dx=S_1 +S_2 +S_3.  \notag  
		\end{eqnarray}
		According to \eqref{eq22}, as $f\in Lip1$ 
		\begin{eqnarray} \label{eq27}
			\left|S_1\right| &=& \left|\sum_{i=1}^{n-1}\left(f\left(\frac{i}{n}\right)-f\left(\frac{i+1}{n}\right)\right)\int_{0}^{i/n}P_n(da, \varepsilon, x)dx \right|  \\
			&=& O(1) \frac{1}{n} \sum_{i=1}^{n-1} \left|\int_{0}^{i/n}P_n(da, \varepsilon, x)dx \right| \notag  \\
			&=& O(1) M_n (da, \varepsilon) = O_{\varepsilon}(1). \notag
		\end{eqnarray} 
		By \eqref{eq25}, as $(a_k) \in l_{q(\varepsilon)}$ we have
		\begin{eqnarray} \label{eq28}
			\left|S_3\right| &=& \left|f\left(1\right)\int_{0}^{1}P_n(da, \varepsilon, x)dx\right| = O(1)  \left|\int_{0}^{1} \sum_{i=1}^{n}d_k a_k \varphi_{n_{k}}(x) dx \right| \notag\\
			&=& O(1)  \left| \sum_{i=1}^{n}d_k a_k \int_{0}^{1}\varphi_{n_{k}}(x) dx \right| 
			= O(1)  \sum_{i=1}^{n}d_k  \left|a_k \right| \frac{1}{d_k k^2} = O_{\varepsilon}(1). 
		\end{eqnarray}
		Finally, by Lemma 2, since $f\in Lip_1,$ $d_n=O(\sqrt{n})$ and $(a_n) \in l_{q(\varepsilon)}$  we obtain
		
		\begin{eqnarray}
			\label{eq29} \left|S_2\right| &=& \left|\sum_{i=1}^{n}\int_{\left(i-1\right)/n}^{i/n}\left(f\left(x\right) -f\left(\frac{i}{n}\right)\right)P_n(da, \varepsilon, x) dx \right| \notag\\
			&=& O(1) \frac{1}{n} \sum_{i=1}^{n} \int_{\left(i-1\right)/n}^{i/n}\left|P_n(da, \varepsilon, x) \right| dx \\
			&=& O(1) \frac{1}{n}  \left(\int_{0}^{1}P_n^2(da, \varepsilon, x) dx \right)^{1/2} \notag = O\left(\frac{1}{n}\right) \left(\sum_{k=1}^{n} d_k^2 a_k^2 \right)^{1/2} \notag \\
			&=& O(1)\, d_n\, n^{-1/2 - 1/q(\varepsilon)} \left( \sum_{k=1}^n a_k^{q(\varepsilon)} \right)^{1/q(\varepsilon)}
			= O_{\varepsilon}(1).  \notag 
		\end{eqnarray}
		
		To reach the final conclusion we combine the findings in \eqref{eq27}, \eqref{eq28} and \eqref{eq29} inequality \eqref{eq26} which yields the result
		\begin{eqnarray*}
			\left|\int_{0}^{1} f(x)P_n(da, \varepsilon, x) dx \right|= O_{\varepsilon}(1)
		\end{eqnarray*}
		and consequently
		\begin{eqnarray}
			\label{eq30} \left|\sum_{k=1}^{n} d_k a_k C_k(f) \right| &=& \left|\sum_{k=1}^{n} d_k a_k \int_{0}^{1} f(x) \varphi_{n_{k}}(x) dx \right| \\ 
			&=& 	\left|\int_{0}^{1} f(x)P_n(da, \varepsilon, x) dx \right|= O_{\varepsilon}(1). \notag
		\end{eqnarray}
		
		From here we conclude that the series 
		\begin{eqnarray*}
			\sum_{k=1}^{\infty} d_k a_k C_k(f)
		\end{eqnarray*}
		is convergent for any $(a_n)\in l_{q(\varepsilon)}.$ Thus   $(d_k C_k (f))\in l_{p(\varepsilon)}$  or
		\begin{eqnarray*}
			\sum_{k=1}^{\infty} \left|d_k C_{n_k}(f)\right|^{2- \varepsilon} < + \infty.
		\end{eqnarray*}
		As we know, by Theorem B the series 
		\begin{eqnarray*}
			\sum_{k=1}^{\infty} d_k C_{n_k}(f)\varphi_{n_{k}}(x)
		\end{eqnarray*}
		is unconditionally convergent a. e. for any $f\in Lip_1.$
		
	\end{proof}
	
	Here a question might arise: is the condition (see Theorem \ref{theorem1})
	\begin{eqnarray*}
		C_n(l) \in l_{p(\varepsilon)}, \text{ where } l(x) = 1, x \in [0,1]
	\end{eqnarray*}
	sufficient for convergence of series
	\begin{eqnarray*}
		\sum_{k=1}^{\infty} \left| C_{k}(f)\right|^{2- \varepsilon}
	\end{eqnarray*}
	for any $f\in Lip_1 \ ?$
	
	\begin{theorem}
		\label{theorem6} 
		There exists a function $g \in Lip_1$ and ONS $(\Phi_n)$  such that 
		\begin{flalign*}
			1) \int_{0}^{1} \Phi_n (x) dx =0,  \ \ \ n=1, 2, \dots . && 
		\end{flalign*}
		\begin{flalign*}
			2) \ \text{For any } \varepsilon > 0, && 
		\end{flalign*}
		\begin{eqnarray*}
			\sum_{n=1}^{\infty} \left|C_n(g, \Phi)\right| ^{p(\varepsilon)} =+ \infty, 
		\end{eqnarray*}
		where 
		\begin{eqnarray*}
			C_n(g,\Phi)= \int_{0}^{1} g(x) \Phi_n (x) dx =0,  \ \ \ (n=1, 2, \dots).
		\end{eqnarray*}
		
	\end{theorem}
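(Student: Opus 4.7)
The plan is to exhibit the required $g$ and $(\Phi_n)$ by applying Theorem A to a concrete $\mathrm{Lip}_1$ function and then symmetrizing the resulting ONS around the midpoint $x = 1/2$, which forces the zero-mean condition 1) automatically. First, fix any nonzero $f_0 \in \mathrm{Lip}_1$ with $f_0(0) = 0$ (for instance $f_0(x) = x(1-x)$). By Theorem A applied to $f_0 \in L_2$, there is an ONS $(\psi_n)$ on $[0,1]$ with $\limsup_{n} |S_n(f_0, x)| = +\infty$ a.e. Theorem B then forces
\[
\sum_{n=1}^\infty |C_n(f_0,\psi)|^{2-\varepsilon} = +\infty \quad \text{for every } \varepsilon \in (0,1),
\]
since finiteness for some $\varepsilon$ would yield unconditional a.e.\ convergence, contradicting the previous line.

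Next, define the new system
\[
\Phi_n(x) = \begin{cases} \psi_n(2x), & x \in [0, 1/2], \\ -\psi_n(2x-1), & x \in [1/2, 1], \end{cases}
\]
and note that the substitutions $u = 2x$ and $u = 2x - 1$ immediately give $\int_0^1 \Phi_n \Phi_m\,dx = \delta_{nm}$ and $\int_0^1 \Phi_n\,dx = 0$, establishing condition 1). Define the witness $g$ on $[0,1]$ by $g(x) = 0$ for $x \in [0, 1/2]$ and $g(x) = -2 f_0(2x - 1)$ for $x \in [1/2, 1]$; continuity at $x=1/2$ uses $f_0(0) = 0$, and the Lipschitz constant of $g$ is at most $4$ times that of $f_0$, so $g \in \mathrm{Lip}_1$.

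Finally, splitting $\int_0^1 g(x) \Phi_n(x)\,dx$ at $x = 1/2$ and performing the same two substitutions gives
\[
C_n(g, \Phi) = \tfrac{1}{2}\int_0^1 \bigl[g(u/2) - g((u+1)/2)\bigr]\psi_n(u)\,du = \int_0^1 f_0(u)\psi_n(u)\,du = C_n(f_0,\psi),
\]
since $g(u/2) = 0$ and $g((u+1)/2) = -2 f_0(u)$ for $u \in [0,1]$. Combining this identity with the divergence from the first paragraph yields condition 2) for every $\varepsilon \in (0,1)$. There is no real obstacle: the one conceptual step is the symmetrization, which converts any ``bad'' ONS into one that annihilates constants; the rest is a clean change of variables together with Theorems A and B.
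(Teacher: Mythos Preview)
Your proof is correct and follows essentially the same route as the paper: apply Theorem~A to a specific nonzero $\mathrm{Lip}_1$ function, invoke Theorem~B to force $\sum |C_n|^{2-\varepsilon}=+\infty$, and then symmetrize the resulting ONS around $x=1/2$ to obtain a zero-mean ONS $(\Phi_n)$ together with a $\mathrm{Lip}_1$ function $g$ supported on one half of $[0,1]$ whose $\Phi$-coefficients coincide (up to a harmless constant) with the original ones. The only cosmetic differences are the concrete choice of $f_0$, which half-interval carries $g$, and your normalization $-2$ so that $C_n(g,\Phi)=C_n(f_0,\psi)$ rather than $\tfrac12\,C_n(f,\varphi)$ as in the paper.
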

	
	\begin{proof}
		Let us assume that $f(x)=1-\cos 4(x- 1/2) \pi.$ According to the Banach Theorem there exist an ONS $(\varphi_n),$ such that 
		\begin{eqnarray}
			\label{eq31} \limsup_{n \rightarrow +\infty} \left|S_n (x, f)\right| =+\infty.
		\end{eqnarray}
		From here for any $\varepsilon >0$
		\begin{eqnarray}\label{eq32}
			\sum_{n=1}^{\infty} \left|C_n(f)\right| ^{p(\varepsilon)} =+ \infty, 
		\end{eqnarray}
		where 
		\begin{eqnarray*}
			C_n(f)= \int_{0}^{1} f(x) \varphi_n (x) dx =0,  \ \ \ (n=1, 2, \dots).
		\end{eqnarray*}
		
		Now we investigate the function
		\begin{equation} \label{eq33}
			g\left( x\right) =\left\{ 
			\begin{array}{ccc}
				f(2x), & \text{when} & x\in \left[ 0,\frac{1}{2}\right], \\ 
				\\
				0, & \text{when} & x\in \left[ \frac{1}{2},1\right].
			\end{array}%
			\right.
		\end{equation}
		It is evident that $g\in Lip_1.$
		
		Suppose  
		\begin{equation} \label{eq34}
			\Phi_n\left( x\right) =\left\{ 
			\begin{array}{ccc}
				\varphi_{n}(2x), & \text{when} & x\in \left[ 0,\frac{1}{2}\right], \\ 
				\\
				- \varphi_{n} \left(2\left(x-\frac{1}{2}\right)\right), & \text{when} & x\in \left[ \frac{1}{2},1\right].
			\end{array}%
			\right.
		\end{equation}
		Then
		\begin{eqnarray*}
			\int_{0}^{1} \Phi_n (x) dx &=& \int_{0}^{1/2} \varphi_{n}(2x) dx -  \int_{1/2}^{1} \varphi_{n} \left(2\left(x-\frac{1}{2}\right)\right) dx
		\end{eqnarray*}
		
		\begin{eqnarray*}
			= \frac{1}{2} \int_{0}^{1} \varphi_{n}(x) dx - \frac{1}{2} \int_{0}^{1} \varphi_{n}(x) dx = 0.
		\end{eqnarray*}
		Next (see \eqref{eq34})
		\begin{eqnarray*}
			\int_{0}^{1} \Phi_n (x) \Phi_m (x)dx &=& \int_{0}^{1/2} \varphi_{n}(2x) \varphi_{m}(2x)dx \\ &+&  \int_{1/2}^{1} \varphi_{n} \left(2\left(x-\frac{1}{2}\right)\right) \varphi_{m} \left(2\left(x-\frac{1}{2}\right)\right)dx\\ &=& \frac{1}{2} \int_{0}^{1} \varphi_{n}(x) \varphi_{m}(x)  dx + \frac{1}{2} \int_{0}^{1} \varphi_{n}(x) \varphi_{m}(x)  dx = 0.
		\end{eqnarray*}
		
		Also (see \eqref{eq34})
		\begin{eqnarray*}
			\int_{0}^{1} \Phi_n^2 (x) dx &=& \int_{0}^{1/2} \varphi_{n}^2(2x) dx +  \int_{1/2}^{1} \varphi_{n}^2 \left(2\left(x-\frac{1}{2}\right)\right) dx \\
			&=& \frac{1}{2} \int_{0}^{1} \varphi_{n}^2(x) dx + \frac{1}{2} \int_{0}^{1} \varphi_{n}^2(x) dx =1.
		\end{eqnarray*}
		Thus, $(\Phi_n)$ is an ONS.
		
		Finally (see \eqref{eq33}, \eqref{eq34})
		\begin{eqnarray*}
			C_n(g,\Phi) &=&  \int_{0}^{1} g(x) \Phi_n(x) dx= \int_{0}^{1/2}f(2x) \varphi_{n}(2x) dx \\
			&=&  \frac{1}{2} \int_{0}^{1}  f(x)\varphi_{n}(x) dx =\frac{1}{2} C_n(f).
		\end{eqnarray*}
		From where, according to \eqref{eq32} we conclude that
		\begin{eqnarray*}
			\sum_{n=1}^{\infty} \left|C_n(g,\Phi)\right|^{p(\varepsilon)} = 2^{- p (\varepsilon)} \sum_{n=1}^{\infty} \left|C_n(f)\right|^{p(\varepsilon)} = + \infty.
		\end{eqnarray*}
		Now we can prove that a.e. on $[0,1]$
		\begin{eqnarray*}
			\limsup_{n \rightarrow +\infty} \left|Q_n(x,g)\right| = \limsup_{n \rightarrow +\infty} \left|\sum_{k=1}^{n} C_k(g,\Phi)\Phi_k (x)\right|=+\infty.
		\end{eqnarray*}
		Indeed, if 
		\begin{eqnarray*}
			S_n(x,f) := \sum_{k=1}^{n} C_k(f) \varphi_{k}(x) 
		\end{eqnarray*}
		then
		\begin{equation*}
			Q_n\left( x,g\right) =\left\{ 
			\begin{array}{ccc}
				\frac{1}{2}S_n(2x,f), & \text{when} & x\in \left[ 0,\frac{1}{2}\right], \\ 
				\\
				\frac{1}{2}S_n(2(x-\frac{1}{2}),g), & \text{when} & x\in \left[ \frac{1}{2},1\right].
			\end{array}%
			\right.
		\end{equation*}
		
		Let us assume that $x\in \left[0,  \frac{1}{2}\right],$ then
		\begin{eqnarray*}
			Q_n\left( x,g\right) = \frac{1}{2}S_n(2x,f) = \frac{1}{2} \sum_{k=1}^{n} C_k(f) \varphi_{k}(2x) 
		\end{eqnarray*}
		or if $2x=t,$ $t\in [0,1]$
		\begin{eqnarray*}
			Q_n \left(\frac{t}{2}, g\right) =  \frac{1}{2} S_n(t,f)=\frac{1}{2} \sum_{k=1}^{n} C_k(f) \varphi_{k}(t). 
		\end{eqnarray*}
		By \eqref{eq31} we get
		\begin{eqnarray*}
			\limsup_{n \rightarrow +\infty} \left|	Q_n \left(\frac{t}{2}, g\right)\right| = \frac{1}{2} \limsup_{n \rightarrow +\infty} \left| S_n(t,f) \right| = + \infty,
		\end{eqnarray*}
		a.e. $t \in \left[0, 1 \right]. $
		
		Analogously is possible to show that, if $x\in \left[\frac{1}{2}, 1 \right],$ then 
		$$ Q_n \left(x, g\right)=\frac{1}{2} S_n \left(2 \left(x-\frac{1}{2}\right),f\right)=\frac{1}{2} \sum_{k=1}^{n} C_k(f) \varphi_{k}\left(2 \left(x-\frac{1}{2}\right)\right) $$
		or if $2 \left(x-\frac{1}{2}\right)=t,$ $t\in [0,1]$
		
		\begin{eqnarray*}
			Q_n \left(\frac{t}{2} + \frac{1}{2}, g\right) =  \frac{1}{2} S_n(t,f)=\frac{1}{2} \sum_{k=1}^{n} C_k(f) \varphi_{k}(t). 
		\end{eqnarray*}
		By \eqref{eq31} we get
		\begin{eqnarray*}
			\limsup_{n \rightarrow +\infty} \left|	Q_n \left(\frac{t}{2} + \frac{1}{2}, g\right)\right| = \frac{1}{2} \limsup_{n \rightarrow +\infty} \left| S_n(t,f) \right|= + \infty,
		\end{eqnarray*}
		a.e. $t \in \left[0, 1 \right]. $
		
		Consequently
		\begin{eqnarray*}
			\limsup_{n \rightarrow +\infty} \left|Q_n(t,g)\right| = + \infty, 
		\end{eqnarray*}
		a.e. on $[0,1],$ for $g \in Lip_1.$
	\end{proof}
	\section{Problems of efficiency }
	We call the condition $M_n(a,\varepsilon) = O(1)$ efficient if it is easily verified for classical ONS.
	
	\begin{theorem}
		\label{theorem7} Let $(\varphi_{n})$  be an ONS and  uniformly with respect to $x \in [0,1]$
		\begin{eqnarray*}
			\int_{0}^{x} \varphi_{n}(u) du = O(1) \frac{1}{n}. 
		\end{eqnarray*}
		Then for arbitrary $(a_n) \in l_{2-\varepsilon}$ $((a_n) \in l_2)$ for any $\varepsilon \in (0,1)$
		\begin{eqnarray}
			\label{eq35}
			M_n (a,\varepsilon)=O(1).
		\end{eqnarray}
		
	\end{theorem}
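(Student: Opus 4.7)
The plan is to reduce the estimate of $M_n(a,\varepsilon)$ to a simple one-dimensional sum in $k$ that decouples the coefficients $a_k$ from the geometry of the system, and then apply Hölder's inequality. Starting from the definitions \eqref{eq1} and \eqref{eq2}, I would first swap the finite sum and the integral to obtain
\begin{eqnarray*}
M_n(a,\varepsilon) = \frac{1}{n} \sum_{i=1}^{n-1} \left| \sum_{k=1}^{n} a_k \int_0^{i/n} \varphi_k(x)\, dx \right|.
\end{eqnarray*}
Applying the triangle inequality together with the hypothesis $\left|\int_0^{x} \varphi_k(u)\, du\right| = O(1/k)$ uniformly in $x \in [0,1]$, this is bounded by $O(1) \cdot \frac{1}{n} \sum_{i=1}^{n-1} \sum_{k=1}^n |a_k|/k$. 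Since the inner sum does not depend on $i$, the factor $\frac{n-1}{n}$ is absorbed and we are left with the clean majorant $O(1) \sum_{k=1}^n |a_k|/k$.

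Next, I would apply Hölder's inequality with the conjugate pair $(p(\varepsilon), q(\varepsilon))$ defined in the preliminaries to obtain
\begin{eqnarray*}
\sum_{k=1}^{n} \frac{|a_k|}{k} \leq \left( \sum_{k=1}^n |a_k|^{p(\varepsilon)} \right)^{1/p(\varepsilon)} \left( \sum_{k=1}^n k^{-q(\varepsilon)} \right)^{1/q(\varepsilon)}.
\end{eqnarray*}
The first factor is finite by the hypothesis $(a_n) \in l_{2-\varepsilon} = l_{p(\varepsilon)}$, and the second factor is uniformly bounded in $n$ since $q(\varepsilon) > 1$ for every $\varepsilon \in (0,1)$, so that $\sum_{k \ge 1} k^{-q(\varepsilon)} < +\infty$. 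Combining these estimates with the previous reduction yields \eqref{eq35}. The parenthetical variant with $(a_n) \in l_2$ is handled identically, using Hölder with $p = q = 2$ and the classical convergence of $\sum k^{-2}$.

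Because every step is mechanical, I do not anticipate a substantive obstacle. The only point requiring care is that the hypothesis is \emph{uniform} in $x$: this uniformity is precisely what permits the $O(1/k)$ bound to be inserted inside the double sum before averaging over $i$, collapsing the outer average into an essentially trivial factor. Without the uniformity one would have to track the $x = i/n$ dependence of the integrals, which would lead to a more delicate argument.
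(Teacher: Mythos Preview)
Your proof is correct and follows essentially the same route as the paper: expand $M_n(a,\varepsilon)$, invoke the uniform bound $\int_0^x \varphi_k = O(1/k)$ to reduce to $O(1)\sum_{k=1}^n |a_k|/k$, and finish with H\"older. The only cosmetic difference is that the paper applies Cauchy--Schwarz directly (using $l_{2-\varepsilon}\subset l_2$) rather than the $(p(\varepsilon),q(\varepsilon))$ pair, which dispatches both stated hypotheses in one stroke.
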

	
	\begin{proof}
		
		\begin{eqnarray}\label{eq36}
			M_n (a,\varepsilon)&=&\frac{1}{n} \sum_{i=1}^{n-1} \left| \int_{0}^{i/n} P_n(a,x) dx \right|  = \frac{1}{n} \sum_{i=1}^{n-1} \left|\sum_{k=1}^{n} a_k \int_{0}^{i/n} \varphi_{k}(x)dx \right|   \\
			&=& O(1) \sum_{k=1}^{n} \frac{1}{k} \left|a_k\right| = O(1)  \left(\sum_{k=1}^{n} a_k^2\right)^{1/2} \left(\sum_{k=1}^{n} k^{-2}\right)^{1/2} = O(1). \notag
		\end{eqnarray}
		So the trigonometric $(\sqrt{2} \cos 2 \pi nx, \sqrt{2} \sin 2 \pi nx)$ and the Walsh systems satisfy the condition \eqref{eq35}.
	\end{proof}
	
	\begin{theorem}
		\label{theorem8}
		If $(X_n)$ is the Haar system, then for any arbitrary $(a_n)\in l_{2-\varepsilon},$ the condition \eqref{eq35} holds for any $\varepsilon \in (0,1).$
	\end{theorem}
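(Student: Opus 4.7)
The strategy is to mirror the Hölder-type argument used in Theorem~\ref{theorem7}, replacing the uniform bound $\int_{0}^{x}\varphi_n(u)du=O(1/n)$ (which fails for Haar functions) with a finer, level-by-level estimate on the integrated Haar system. Write $X_1\equiv 1$ and for $k=2^m+j$ with $m\ge 0,\ 1\le j\le 2^m$, note that $X_k$ is supported on the dyadic interval $I_k=[(j-1)/2^m,j/2^m]$ and takes the values $\pm 2^{m/2}$. Setting $G_k(x):=\int_0^x X_k(u)\,du$, one has $|G_1(x)|\le 1$ and, for $k=2^m+j$, the primitive $G_k$ is supported on $I_k$ with $\|G_k\|_\infty\le 2^{-m/2-1}$. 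The critical combinatorial observation is that, for every fixed $x\in[0,1]$ and every dyadic level $m$, at most one index $k=2^m+j$ (namely the one for which $x\in I_k$) satisfies $G_k(x)\neq 0$.

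The plan is, first, to rewrite
\[
\int_0^{i/n} P_n(a,\varepsilon,x)\,dx \;=\; \sum_{k=1}^{n} a_k\,G_k(i/n),
\]
and then apply Hölder's inequality with the conjugate exponents $p(\varepsilon)=2-\varepsilon$ and $q(\varepsilon)=(2-\varepsilon)/(1-\varepsilon)$:
\[
\left|\sum_{k=1}^{n} a_k\,G_k(i/n)\right|
\;\le\;
\left(\sum_{k=1}^{n}|a_k|^{p(\varepsilon)}\right)^{1/p(\varepsilon)}
\left(\sum_{k=1}^{n}|G_k(i/n)|^{q(\varepsilon)}\right)^{1/q(\varepsilon)}.
\]
Since $(a_n)\in\ell_{2-\varepsilon}$, the first factor is uniformly bounded in $n$, so the whole argument reduces to showing that the second factor is $O_\varepsilon(1)$ uniformly in $x=i/n$.

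The estimate of $\sum_{k=1}^n |G_k(x)|^{q(\varepsilon)}$ is where the Haar structure pays off. By the localization observation, for each $m\ge 0$ only a single term at level $m$ contributes, and that term is at most $2^{-m/2-1}$ in absolute value; together with the $k=1$ contribution $|G_1(x)|\le 1$, this gives
\[
\sum_{k=1}^{n}|G_k(x)|^{q(\varepsilon)}
\;\le\; 1 + \sum_{m=0}^{\infty} 2^{-(m/2+1)\,q(\varepsilon)}.
\]
Because $q(\varepsilon)>2$, one has $q(\varepsilon)/2>1$, so the geometric series converges and the bound is $O_\varepsilon(1)$ independently of $x$ and $n$. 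Substituting back into the Hölder inequality yields $\left|\int_0^{i/n}P_n(a,\varepsilon,x)\,dx\right|=O_\varepsilon(1)$ uniformly in $i$, and averaging over $i=1,\dots,n-1$ gives $M_n(a,\varepsilon)=O_\varepsilon(1)$, which is exactly \eqref{eq35}.

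The only delicate point—and the step where one must be careful—is the combinatorial/geometric accounting of supports: identifying that at every dyadic level exactly one Haar primitive is active at a prescribed point, and that its sup-norm decays like $2^{-m/2}$. Once this is in hand, the strict inequality $q(\varepsilon)>2$ (equivalently, $\varepsilon\in(0,1)$) is precisely what makes the resulting geometric series summable, and the rest is a direct Hölder estimate analogous to that in Theorem~\ref{theorem7}.
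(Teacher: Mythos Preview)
Your proof is correct and rests on the same Haar-specific observation as the paper: at each dyadic level $m$ only one primitive $G_k$ is active at a given point, with $\|G_k\|_\infty\le 2^{-m/2-1}$. The paper packages this slightly differently---it first groups by level to get $\bigl|\sum_{k=2^m}^{2^{m+1}-1}a_kG_k(x)\bigr|\le 2^{-m/2}|a_{k(m)}|$, then bounds $|a_{k(m)}|$ by the block $\ell_2$-norm and applies Cauchy--Schwarz over levels, so the argument actually goes through under the weaker hypothesis $(a_n)\in\ell_2$; your single H\"older step with exponents $(p(\varepsilon),q(\varepsilon))$ is a bit more direct but uses the full $\ell_{2-\varepsilon}$ assumption. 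One cosmetic remark: your justification ``$q(\varepsilon)/2>1$, so the geometric series converges'' is phrased as if it were a $p$-series test; the relevant fact is simply that the ratio $2^{-q(\varepsilon)/2}<1$, which holds for any $q(\varepsilon)>0$.
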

	
	\begin{proof}
		According to the definition of the Haar system we have (see \cite{Alexits} ch.I, \#6. p.54)
		\begin{eqnarray*} 
			\left|\int_{0}^{x} \sum_{k=2^m}^{2^{m+1}} a_k X_{k}(u) du\right| \leq 2^{-m/2} \left|a_{k(m)}\right|,
		\end{eqnarray*}
		where $2^m \leq k(m) < 2^{m+1}$.
		
		Using the  inequality \eqref{eq36}, when $n=2^p$ for an arbitrary $f\in Lip_1$ we get
		\begin{eqnarray*}
			V_n (a)&=&\frac{1}{n} \sum_{i=1}^{n-1} \left| \int_{0}^{i/n} P_n(a,x) dx \right| = \frac{1}{2^p} \sum_{i=1}^{2^p-1} \left|\sum_{m=0}^{p-1} \int_{0}^{i/{2^p}} \sum_{k=2^m}^{2^{m+1}-1}X_k(x)a_k dx\right|  \\
			&=& O(1) \sum_{m=0}^{p-1} 2^{-m/2} \left|a_{k(m)}\right| = O(1) \sum_{m=0}^{p-1} 2^{-m/2} \left(  \sum_{k=2^m}^{2^{m+1}-1}  a_{k}^{2} \right)^{1/2} \\
			&=&O(1) \left(\sum_{m=0}^{p-1}  \sum_{k=2^m}^{2^{m+1}-1}  a_{k}^{2} \right)^{1/2} \left(\sum_{m=0}^{p} 2^{-m}\right)^{1/2}=  O(1).
		\end{eqnarray*}
		
		Finally, we conclude that when $n=2^p+l,$ $1\leq l<2^p$ the condition \eqref{eq35} is valid.
		
	\end{proof}

	\section{Conclusion}
	From the discussions presented throughout this article, it is evident that, although the general Fourier series of functions in the \(Lip_1\) class do not converge in the usual sense, it is nevertheless possible to identify a subclass of orthonormal systems whose elements satisfy specific conditions, with respect to which the Fourier series of \(Lip_1\) functions converge unconditionally (see Theorem~\ref{theorem1.1}). Furthermore, we have established that the conditions imposed on the functions of the orthonormal system are both precise and valid.
	
	Moreover, it is worth noting that every orthonormal system contains a subsystem with respect to which the general Fourier series of any function \(f \in Lip_1\) converge unconditionally almost everywhere on the interval \([0,1]\).\\

	\textbf{Acknowledgement:}
	The authors wish to extend their deepest gratitude to their esteemed colleagues and collaborators for their invaluable contributions throughout the course of this research. Their insightful discussions, constructive criticism, and unwavering support have played a pivotal role in shaping the development and successful completion of this work.
	
	We would also like to sincerely acknowledge the anonymous reviewers for their meticulous and thoughtful examination of the manuscript. Their careful scrutiny and insightful comments have significantly enhanced the clarity, coherence, and overall quality of the paper.
	
	Moreover, we express our profound appreciation to the editorial board and reviewers of Publicationes Mathematicae Debrecen for their dedication to upholding rigorous academic standards. Their ongoing commitment to excellence fosters a scholarly environment that greatly facilitates the advancement of research within the mathematical sciences.

\end{document}